\DeclareMathOperator{\charac}{char}
\DeclareMathOperator{\gr}{\operatorname{\mathsf{gr}}}
\newcommand{\hatalpha}{\widehat{\alpha}}
\renewcommand{\tilde}{\widetilde}
\newcommand{\grF}{\mathsf{F}}
\newcommand{\grV}{\mathsf{V}}
\newcommand{\grU}{\mathsf{U}}
\newcommand{\grW}{\mathsf{W}}
\newcommand{\Q}{\mathbb{Q}}
\newcommand{\F}{\mathbb{F}}
\newcommand{\Z}{\mathbb{Z}}
\newtheorem{prop}{Proposition}
\newtheorem{lem}[prop]{Lemma}
\newtheorem{thm}[prop]{Theorem}
\newtheorem{cor}[prop]{Corollary}
\theoremstyle{definition}
\newtheorem{example}[prop]{Example}
\newtheorem{definition}[prop]{Definition}
\title[Tame quadratic forms]{Springer's theorem for tame quadratic
  forms over Henselian fields}
\author{M. A. Elomary}
\address{Moulay Ismail University, F.S.T. Errachidia\\
B.P. 509, Boutalamine, Errachidia\\
Morocco}
\email{elomary@fste-umi.ac.ma}
\author{J.-P. Tignol}
\address{Universit\'e de Louvain\\
B~1348 Louvain-la-Neuve, Belgium}
\email{Jean-Pierre.Tignol@uclouvain.be}
\thanks{The first author would like to thank the second author and UCL
  for their hospitality while the work for this paper was done. He
  gratefully acknowledges support from the UCL Conseil des relations
  internationales. The
  second author is partially supported by the F.R.S.--FNRS (Belgium).}
\subjclass[2000]{11E81}
\begin{document}

\begin{abstract}
  A quadratic form over a Henselian-valued field of arbitrary residue
  characteristic is tame if it becomes hyperbolic over a tamely
  ramified extension. The Witt group of tame quadratic forms is shown
  to be canonically isomorphic to the Witt group of graded quadratic
  forms over the graded ring associated to the filtration defined by
  the valuation, hence also isomorphic to a direct sum of copies of
  the Witt group of the residue field indexed by the value group
  modulo~2. 
\end{abstract}

\maketitle

A celebrated theorem of Springer \cite{S} establishes an isomorphism
between the Witt group of a complete discretely valued field and the
direct sum of two copies of the Witt group of the residue field,
provided the residue characteristic is different from~$2$. Springer
also considered the case where the residue characteristic is $2$, and
he pointed out the extra complications that arise from the fact that
the residue forms are not necessarily nonsingular, even when the
residue field is perfect. The exhaustive analysis by Aravire--Jacob
\cite{AJ} shows that the description of the Witt group of a dyadic
Henselian field is extremely delicate, even when the field is
maximally complete of characteristic~$2$.

Our purpose is to show that, by contrast, Springer's original
techniques---as revisited in \cite{RTW}---yield a very general
characteristic-free version of Springer's theorem for the tame part of
the Witt group. Our main result is the following: let $F$ be a field
with a Henselian valuation $v$, with value group $\Gamma_F$ and
residue field $\overline{F}$ of arbitrary characteristic; there is a
group isomorphism
\[
\partial\colon I_{qt}(F) \stackrel{\sim}{\to}
\bigoplus_{\Gamma_F/2\Gamma_F} I_q(\overline{F})
\]
where $I_q(\overline{F})$ is the quadratic Witt group of
$\overline{F}$, consisting of Witt-equivalence classes of
even-dimensional nonsingular quadratic forms over $\overline{F}$, and
$I_{qt}(F)\subseteq I_q(F)$ is the subgroup of Witt classes that are
split by a tamely ramified extension of 
$F$. The isomorphism $\partial$ is defined by residue forms, which
depend on the choice of ``uniformizing parameters'' $\pi_\delta\in
F^\times$ for representatives $\delta$ of the various cosets of
$\Gamma_F$ modulo $2\Gamma_F$. It is actually obtained as the
composition of a \emph{canonical} isomorphism
\begin{equation}
  \label{eq:1}
  I_{qt}(F)\stackrel{\sim}{\to} I_q\bigl(\gr(F)\bigr),
\end{equation}
where $\gr(F)$ is the graded ring associated with the filtration of
$F$ defined by the valuation $v$, and an isomorphism
\begin{equation}
  \label{eq:2}
  I_q\bigl(\gr(F)\bigr)\stackrel{\sim}{\to}
  \bigoplus_{\Gamma_F/2\Gamma_F} I_q(\overline{F})
\end{equation}
depending on the choice of uniformizing parameters. If $\charac
\overline{F}\neq2$, the isomorphism $\partial$ extends to the whole
Witt group $W(F)$, as is well-known (see \cite[Satz~3.1]{Tietze} of
\cite{RTW}, for instance).
The case where $\charac\overline{F}=2$ was also considered by Tietze
in \cite{Tietze}\footnote{We are grateful to S.~Garibaldi for calling
  our attention on this reference.}, who also obtained an isomorphism
from a subgroup $U(F)\subseteq I_q(F)$ onto
$\bigoplus_{\Gamma_F/2\Gamma_F} I_q(\overline{F})$. Tietze's method is
quite different: it uses a presentation of Witt groups by generators
and relations, and $U(F)$ is defined in \cite{Tietze} as the subgroup
generated by the Witt classes of quadratic forms $ax^2+bxy+cy^2$ with
$v(a)=v(b)=v(c)$. Comparing Tietze's results with ours, it appears
that actually $U(F)=I_{qt}(F)$, see Corollary~\ref{cor:inertsplit}.

Sections~\ref{sec:gradedWitt} and \ref{sec:norms} introduce the
formalism of norms on vector spaces and graded rings. The Witt group
of graded fields is discussed in \S\ref{sec:gradedWitt}, where we
prove the isomorphism~\eqref{eq:2}: see
Theorem~\ref{thm:gradedWitt}. The isomorphism \eqref{eq:1} is obtained
in \S\ref{sec:norms} (see Corollary~\ref{cor:deliso}), where
$I_{qt}(F)$ is defined as the group of Witt classes of quadratic
spaces that admit a certain type of norms, which we call \emph{tame
  norms}. In \S\ref{sec:tame}, we establish the alternative
characterization of $I_{qt}(F)$ in terms of tame splitting fields.

The following notation will be used consistently throughout: $F$ is a
field with a (Krull) valuation $v\colon F\to\Gamma\cup\{\infty\}$,
where $\Gamma$ is a totally ordered group. Without loss of generality,
we assume $\Gamma$ is divisible, since we may substitute for $\Gamma$
its divisible hull. We let $\Gamma_F$ denote the value group of $F$,
i.e., $\Gamma_F=v(F^\times)\subseteq\Gamma$, and let $\overline{F}$ denote
the residue field of $F$.

For quadratic forms, we generally follow the terminology of
\cite{EKM}: if $q\colon V\to F$ is a quadratic form on an $F$-vector
space $V$, we let $b\colon V\times V\to F$ denote the polar form
\[
b(x,y)=q(x+y)-q(x)-q(y)\qquad\text{for $x$, $y\in V$.}
\]
The form $b$ is \emph{nondegenerate} if $x=0$ is the only vector such
that $b(x,y)=0$ for all $y\in V$. We call the quadratic form $q$
\emph{regular} if $x=0$ is the only vector in $V$ such that $q(x)=0$
and $b(x,y)=0$ for all $y\in V$, and \emph{nondegenerate} if it
remains regular after scalar extension to an algebraic closure of
$F$. We say $q$ is \emph{nonsingular} 
if its polar form is nondegenerate; thus, if $\charac F\neq2$ or if
$\dim q$ is even, $q$ is nonsingular if and only if it is
nondegenerate. If $\charac F=2$, all
nonsingular forms are even-dimensional since their polar form is
nondegenerate and alternating, but $x_1x_2+x_3^2$ is an
odd-dimensional nondegenerate form.

\section{Quadratic Witt group of graded fields}
\label{sec:gradedWitt}
Let $\Gamma$ be a divisible torsion-free abelian group, which will
contain the degrees of all the graded structures we shall
consider. Graded commutative rings in which every nonzero homogeneous
element is invertible are called \emph{graded fields}, and graded
modules over graded fields are called \emph{graded vector
  spaces}. Since $\Gamma$ is torsion-free, graded fields are domains
and graded vector spaces are free modules, see \cite[\S1]{HW}. The
rank of a graded vector space is called its \emph{dimension}. Let
\[
\grF=\bigoplus_{\gamma\in\Gamma}\grF_\gamma
\]
be a graded field and
\[
\grV=\bigoplus_{\gamma\in\Gamma}\grV_\gamma
\]
be a graded $\grF$-vector space. We let $\Gamma_\grF$, $\Gamma_\grV$ denote
the sets of degrees of $\grF$ and $\grV$, i.e.,
\[
\Gamma_\grF=\bigl\{\gamma\in\Gamma\mid \grF_\gamma\neq\{0\}\bigr\},
\qquad
\Gamma_\grV=\bigl\{\gamma\in\Gamma\mid \grV_\gamma\neq\{0\}\bigr\}.
\]
The set $\Gamma_\grF$ is a subgroup of $\Gamma$, and $\Gamma_\grV$ is
a union of cosets of $\Gamma_\grF$. For each coset
$\Lambda\in\Gamma/\Gamma_F$, let
\[
\grV_\Lambda=\bigoplus_{\lambda\in\Lambda}\grV_\lambda
\]
(so $\grV_\Lambda=\{0\}$ if $\Lambda\not\subset\Gamma_V$). If
$\Gamma_\grV$ is the disjoint union of cosets $\Lambda_1$, \ldots,
$\Lambda_n\in\Gamma/\Gamma_F$, i.e.,
$\Gamma_\grV=\Lambda_1\sqcup\ldots\sqcup \Lambda_n$, we have a canonical
decomposition of $\grV$ into graded sub-vector spaces
\begin{equation}
  \label{eq:candec}
  \grV=\bigoplus_{i=1}^n\grV_{\Lambda_i}.
\end{equation}
Note that the homogeneous component $\grF_0$ of $\grF$ is a field, and
each $\grV_\gamma$ for $\gamma\in\Gamma$ is an $\grF_0$-vector
space. Picking an element $\lambda_i\in\Lambda_i$ for each $i=1$,
\ldots, $n$, we have
\[
\dim_\grF\grV=\sum_{i=1}^n\dim_\grF\grV_{\Lambda_i} =
\sum_{i=1}^n\dim_{\grF_0} \grV_{\lambda_i}. 
\]

Let $\grV$ be a finite-dimensional graded vector space over a graded
field $\grF$. A \emph{graded quadratic form} on $\grV$ is a map
\[
q\colon \grV\to\grF
\]
satisfying the following conditions involving $q$ and its polar form
$b\colon \grV\times\grV\to\grF$ defined by
\[
b(x,y)=q(x+y)-q(x)-q(y):
\]
\begin{enumerate}
\item[(i)]
$q(xa)=q(x)a^2$ for all $x\in\grV$, $a\in\grF$;
\item[(ii)]
$b$ is an $\grF$-bilinear form on $\grV$;
\item[(iii)]
$q(\grV_\gamma)\subseteq \grF_{2\gamma}$ for all $\gamma\in\Gamma$;
\item[(iv)]
$b(\grV_\gamma,\grV_\delta)\subseteq \grF_{\gamma+\delta}$ for all
$\gamma$, $\delta\in\Gamma$.
\end{enumerate}
These conditions imply in particular that
\begin{equation}
  \label{eq:conseq}
  q(\grV_\gamma)=\{0\}\text{ if
    $\gamma\notin\textstyle\frac12\Gamma_\grF$} \quad\text{and}\quad
  b(\grV_\gamma,\grV_\delta)=\{0\}\text{ if $\gamma+\delta\notin\Gamma_\grF$}.
\end{equation}
The graded quadratic form $q$ is called \emph{nonsingular} if its
polar form $b$ is nondegenerate, i.e., $x=0$ is the only vector such
that $b(x,y)=0$ for all $y\in V$. It is called \emph{hyperbolic} if it
is nonsingular and $\grV=\grU\oplus\grW$ for some graded subspaces
$\grU$, $\grW$ such that $q(\grU)=q(\grW)=\{0\}$.

\begin{prop}
  Let $q$ be a nonsingular graded quadratic form on $\grV$. The
  canonical decomposition \eqref{eq:candec} yields an orthogonal
  decomposition 
  \[
  \grV=\Bigl(\bigoplus_{\Lambda\in\frac12\Gamma_\grF/\Gamma_\grF}^\perp \grV_\Lambda\Bigr)
  \stackrel{\perp}{\oplus} \grW
  \qquad\text{where}\quad
  \grW=\bigoplus_{\Lambda\notin\frac12\Gamma_\grF/\Gamma_\grF} \grV_\Lambda.
  \]
  For each $\Lambda\in \frac12\Gamma_\grF/\Gamma_\grF$, the restriction
  of $q$ to $\grV_\Lambda$ is 
  nonsingular. Moreover, the restriction of $q$ to $\grW$ is hyperbolic.
\end{prop}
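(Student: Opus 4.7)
The plan is to leverage the grading together with nondegeneracy of the polar form $b$. The core observation is that condition~(iv), combined with~\eqref{eq:conseq}, forces $b(\grV_\Lambda,\grV_{\Lambda'})=\{0\}$ for any two cosets $\Lambda,\Lambda'\in\Gamma/\Gamma_\grF$ with $\Lambda+\Lambda'\neq 0$, since $\grF_{\gamma+\delta}=\{0\}$ whenever $\gamma+\delta\notin\Gamma_\grF$. Because $\Lambda+\Lambda=0$ in $\Gamma/\Gamma_\grF$ precisely when $\Lambda\in\frac12\Gamma_\grF/\Gamma_\grF$, each $\grV_\Lambda$ with $\Lambda\in\frac12\Gamma_\grF/\Gamma_\grF$ is orthogonal to every other $\grV_{\Lambda'}$, while for $\Lambda\notin\frac12\Gamma_\grF/\Gamma_\grF$ the only summand that $\grV_\Lambda$ may fail to be orthogonal to is $\grV_{-\Lambda}$. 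This immediately yields the claimed orthogonal decomposition, since in particular $\grW$ is orthogonal to every $\grV_\Lambda$ with $\Lambda\in\frac12\Gamma_\grF/\Gamma_\grF$.

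Next, for each $\Lambda\in\frac12\Gamma_\grF/\Gamma_\grF$ I would show that $q|_{\grV_\Lambda}$ is nonsingular as follows: if $x\in\grV_\Lambda$ satisfies $b(x,\grV_\Lambda)=\{0\}$, then by the orthogonality just established we also have $b(x,\grV)=\{0\}$, so nondegeneracy of $b$ on $\grV$ forces $x=0$. The same argument applied to $\grW$ shows that $q|_\grW$ is nonsingular, which is part of the definition of a hyperbolic form.

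For the final assertion, I would partition the cosets $\Lambda\in\Gamma_\grV/\Gamma_\grF$ with $\Lambda\notin\frac12\Gamma_\grF/\Gamma_\grF$ into pairs $\{\Lambda,-\Lambda\}$---genuine pairs, since $\Lambda\neq-\Lambda$ for such $\Lambda$---and choose one representative from each pair. Let $\grU$ be the graded sum of the $\grV_\Lambda$'s for the chosen representatives and let $\grU'$ be the graded sum of the remaining $\grV_{-\Lambda}$'s. By~\eqref{eq:conseq}, $q(\grU)=q(\grU')=\{0\}$, and $\grW=\grU\oplus\grU'$ by construction, so $q|_\grW$ is hyperbolic. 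The only subtle point is the bookkeeping with the involution $\Lambda\mapsto-\Lambda$ on cosets and the verification that $\grU$ and $\grU'$ are graded (automatic, since each $\grV_\Lambda$ is already graded); beyond that, no ingredient is used that goes deeper than conditions~(iii)--(iv) and the nondegeneracy of $b$.
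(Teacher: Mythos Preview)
Your proof is correct and follows essentially the same route as the paper's: both arguments rest on~\eqref{eq:conseq} and the nondegeneracy of $b$, and the Lagrangian decomposition $\grW=\grU\oplus\grU'$ obtained by pairing the cosets $\{\Lambda,-\Lambda\}$ is exactly what underlies the cited result \cite[Proposition~1.1]{RTW}. The paper simply outsources the bilinear-form assertions to \cite{RTW} and then adds the observation $q(\grV_\Lambda)=\{0\}$ for $\Lambda\not\subset\frac12\Gamma_\grF$; you have written out those details directly. One small point worth making explicit: your partition of the relevant cosets in $\Gamma_\grV/\Gamma_\grF$ into pairs $\{\Lambda,-\Lambda\}$ tacitly uses that this set is closed under $\Lambda\mapsto-\Lambda$, which follows from nondegeneracy of $b$ (if $\grV_{-\Lambda}=\{0\}$ then any nonzero $x\in\grV_\Lambda$ would lie in the radical).
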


\begin{proof}
  All assertions except the last one really concern the polar form
  $b$; their easy proof (based on the observation \eqref{eq:conseq})
  can be found in \cite[Proposition~1.1]{RTW}. It is also proven there
  that the restriction of $b$ to $\grW$ is
  hyperbolic. Since moreover $q(\grV_\Lambda)=\{0\}$ for all cosets
  $\Lambda\not\subset\frac12\Gamma_\grF$ by 
  \eqref{eq:conseq}, the proposition follows.
\end{proof}

For each coset $\Delta\in\Gamma_\grF/2\Gamma_\grF$, fix an element
$\delta\in\Delta$ and a nonzero homogeneous element $\pi_\delta\in
\grF_{\delta}$ ($\delta=0$ and $\pi_0=1$ for $\Delta=2\Gamma_\grF$) and let
\begin{equation}
  \label{eq:resa}
  q_\Delta\colon\grV_{\frac12\delta}\to\grF_0,\qquad x\mapsto
  \pi_\delta^{-1}q(x). 
\end{equation}
Since the restriction of $q$ to $\grV_{\frac12\Delta}$ is nonsingular,
it follows that $q_\Delta$ is a nonsingular quadratic form on the
$\grF_0$-vector space $\grV_{\frac12\delta}$. Of course, this
quadratic form depends on the choice of $\pi_\delta$, except for
$\Delta=2\Gamma_\grF$. 
\medbreak
\par
Mimicking the usual construction, we may define a Witt equivalence of
nonsingular graded quadratic forms over a given graded field $\grF$
and endow the set of Witt-equivalence classes with a group structure
using the orthogonal sum. We let $I_q(\grF)$ denote the quadratic Witt
group of $\grF$, consisting of Witt-equivalence classes of
even-dimensional nonsingular graded quadratic forms over $\grF$.

\begin{thm}
  \label{thm:gradedWitt}
  The map that carries each nonsingular graded quadratic form $q$ to
  the collection of nonsingular $\grF_0$-quadratic forms
  $(q_\Delta)_{\Delta\in 
    \Gamma_\grF/2\Gamma_\grF}$ defines a group isomorphism:
  \[
  I_q(\grF)\stackrel{\sim}{\to} \bigoplus_{\Gamma_\grF/2\Gamma_\grF}
  I_q(\grF_0). 
  \]
  This isomorphism depends on the choice of the homogeneous elements
  $\pi_\delta\in \grF_{\delta}$.
\end{thm}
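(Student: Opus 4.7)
The plan is to verify in turn that the map $\Phi\colon q\mapsto(q_\Delta)_\Delta$ is a well-defined group homomorphism, that it is surjective, and that it is injective. The preceding proposition supplies the main structural input: it reduces the Witt class of $q$ to the collection of restrictions $q|_{\grV_{\frac12\Delta}}$, since the complementary graded subspace $\grW$ is hyperbolic and hence Witt-trivial.

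For well-definedness and additivity, observe that $q|_{\grV_{\frac12\Delta}}$ is entirely recovered from $q_\Delta$: the graded $\grF$-module $\grV_{\frac12\Delta}$ is generated by the $\grF_0$-subspace $\grV_{\frac12\delta}$, and axiom (i) gives $q(xa)=q(x)a^2=\pi_\delta q_\Delta(x)a^2$ for $x\in\grV_{\frac12\delta}$, $a\in\grF$. Additivity under orthogonal sum follows from $(q\perp q')_\Delta=q_\Delta\perp q'_\Delta$. If $q$ is hyperbolic with graded Lagrangian decomposition $\grV=\grU\oplus\grU'$, then $(\grU\cap\grV_{\frac12\delta})\oplus(\grU'\cap\grV_{\frac12\delta})=\grV_{\frac12\delta}$ is a Lagrangian decomposition showing that $q_\Delta$ is hyperbolic.

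For surjectivity, given a family $(h_\Delta)_\Delta$ (only finitely many nonzero) of nonsingular even-dimensional $\grF_0$-quadratic forms on $\grF_0$-vector spaces $V_\Delta$, construct the graded $\grF$-vector space $\grV=\bigoplus_\Delta(V_\Delta\otimes_{\grF_0}\grF)$ with $V_\Delta$ placed in degree $\frac12\delta$, and define the graded quadratic form by $q(v\otimes a)=\pi_\delta h_\Delta(v)a^2$ on each summand, extended orthogonally. The axioms (i)--(iv) and nonsingularity follow directly from the corresponding properties of the $h_\Delta$, and $\Phi(q)=(h_\Delta)_\Delta$ by construction.

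The substantive step is injectivity. Suppose every $q_\Delta$ is Witt-trivial, hence hyperbolic; choose for each $\Delta$ an $\grF_0$-Lagrangian decomposition $\grV_{\frac12\delta}=U_\Delta\oplus U'_\Delta$. Form the graded $\grF$-submodules $\grU_\Delta=U_\Delta\cdot\grF$ and $\grU'_\Delta=U'_\Delta\cdot\grF$ inside $\grV_{\frac12\Delta}$. Because multiplication by a nonzero homogeneous element of $\grF$ is an $\grF_0$-linear isomorphism from $\grV_{\frac12\delta}$ onto $\grV_\lambda$ for each $\lambda\in\frac12\Delta$, one obtains $\grV_{\frac12\Delta}=\grU_\Delta\oplus\grU'_\Delta$ componentwise; axiom (i) then forces $q(\grU_\Delta)=q(\grU'_\Delta)=0$, so $q|_{\grV_{\frac12\Delta}}$ is hyperbolic. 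Combined with the Proposition's assertion that $q|_\grW$ is hyperbolic, $q$ itself is hyperbolic, proving injectivity. The main obstacle is precisely this lifting of Lagrangians from $\grV_{\frac12\delta}$ to $\grV_{\frac12\Delta}$, which however reduces to the observation that $\grF$ is a graded field, so that multiplication by homogeneous units preserves total isotropy.
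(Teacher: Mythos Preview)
Your proof is correct and is precisely the routine verification the paper omits: the paper gives no argument beyond the sentence ``The proof is routine'' and a citation to \cite[Proposition~1.5(iv)]{RTW} for a more general hermitian analogue. Your decomposition via the Proposition, the reconstruction of $q|_{\grV_{\frac12\Delta}}$ from $q_\Delta$, and the lifting of Lagrangians from $\grV_{\frac12\delta}$ to $\grV_{\frac12\Delta}$ using that homogeneous units of $\grF$ act by $\grF_0$-isomorphisms between homogeneous components are exactly the expected steps.
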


The proof is routine. See \cite[Proposition~1.5(iv)]{RTW} for the
(more complicated) case of even hermitian forms over graded division
algebras with involution.

\section{Norms and residues}
\label{sec:norms}
Let $(F,v)$ be an arbitrary valued field and $V$ a finite-dimensional
$F$-vector space. We recall from \cite{RTW} and \cite{TWgr} that a
\emph{$v$-value function} on $V$ is a map
\[
\alpha\colon V\to\Gamma\cup\{\infty\}
\]
satisfying the following properties, for $x$, $y\in V$ and $\lambda\in
F$:
\begin{enumerate}
\item[(i)] $\alpha(x)=\infty$ if and only if $x=0$;
\item[(ii)] $\alpha(x\lambda)=\alpha(x)+v(\lambda)$;
\item[(iii)] $\alpha(x+y)\geq \min\bigl(\alpha(x),\alpha(y)\bigr)$.
\end{enumerate}
The $v$-value function $\alpha$ is called a \emph{$v$-norm} if there
is a base $(e_i)_{i=1}^n$ of $V$ that splits $\alpha$ in the following
sense:
\[
\alpha\Bigl(\sum_{i=1}^ne_i\lambda_i\Bigr) =
\min\bigl(\alpha(e_i\lambda_i) \mid i=1,\ldots,n\bigr) \qquad\text{for
  $\lambda_1$, \ldots, $\lambda_n\in F$.}
\]
Any $v$-value function $\alpha$ on $V$ defines a filtration of $V$ by
modules over the valuation ring of $F$: for any $\gamma\in\Gamma$ we
let
\[
V^{\geq\gamma}=\{x\in V\mid \alpha(x)\geq\gamma\},\quad
V^{>\gamma} = \{x\in V\mid \alpha(x)>\gamma\},\quad
V_\gamma=V^{\geq\gamma}/V^{>\gamma},
\]
and we define
\[
\gr_\alpha(V)=\bigoplus_{\gamma\in\Gamma} V_\gamma.
\]
Similarly, let $\gr(F)$ be the graded ring associated to the
filtration of $F$ defined by the valuation. Since every nonzero
homogeneous element of $\gr(F)$ is invertible, this ring is a
graded field. The $F$-vector space structure on $V$ induces on
$\gr_\alpha(V)$ a structure of graded $\gr(F)$-module, so
$\gr_\alpha(V)$ is a graded $\gr(F)$-vector space. For any nonzero
$x\in V$ we let
\[
\tilde x = x+ V^{>\alpha(x)} \in V_{\alpha(x)}\subseteq \gr_\alpha(V).
\]
We also let $\tilde 0 = 0$ and use a similar notation for elements in
$\gr(F)$. It is shown in \cite[Corollary~2.3]{RTW} that a base
$(e_i)_{i=1}^n$ of $V$ splits $\alpha$ if and only if $(\tilde
e_i)_{i=1}^n$ is a $\gr(F)$-base of $\gr_\alpha(V)$, and that $\alpha$
is a norm if and only if
\[
\dim_{\gr(F)}\gr_\alpha(V) = \dim_FV.
\]

Now, let $q\colon V\to F$ be an arbitrary quadratic form, with polar
form $b\colon V\times V\to F$. We say that a $v$-value function
$\alpha$ is \emph{bounded by $q$}, and write $\alpha\prec q$, if the
following two conditions hold:
\begin{enumerate}
\item[(a)]
$v\bigl(b(x,y)\bigr) \geq \alpha(x)+\alpha(y)$ for all $x$, $y\in V$,
and
\item[(b)]
$v\bigl(q(x)\bigr) \geq 2\alpha(x)$ for all $x\in V$.
\end{enumerate}
Of course, letting $x=y$ in (a) yields (b) if $\charac F\neq2$. On the
other hand, (b) does not imply (a): see Example~\ref{ex:notbounded}
below. Note that Springer in \cite{S2} requires only~(b), whereas
Goldman--Iwahori in \cite{GI} require only~(a). Both conditions are
required by Bruhat--Tits in \cite[D\'efinition~2.1]{BT}.

For each $\gamma\in\Gamma$, let $p_\gamma\colon F^{\geq\gamma}\to
F_\gamma$ be the canonical map. When $\alpha\prec q$ we define maps
\[
\tilde q_\alpha\colon V_\gamma\to F_{2\gamma}\quad\text{and}\quad
\tilde b_\alpha\colon V_\gamma\times V_\delta\to F_{\gamma+\delta}
\quad\text{for $\gamma$, $\delta\in\Gamma$}
\]
by
\[
\tilde q_\alpha(\tilde x) = p_{2\gamma}\bigl(q(x)\bigr)
\quad\text{and}\quad \tilde b_\alpha(\tilde x, \tilde y) =
p_{\gamma+\delta}\bigl(b(x,y)\bigr)
\]
for $x$, $y\in V$ with $\alpha(x)=\gamma$ and $\alpha(y)=\delta$.
We extend $\tilde b_\alpha$ to $\gr_\alpha(V)$ by bilinearity and
define
\[
\tilde q_\alpha\colon\gr_\alpha(V)\to\gr(F)
\]
as follows: for $\tilde x_\gamma\in V_\gamma$,
\[
\tilde q_\alpha\Bigl(\sum_{\gamma\in\Gamma} \tilde x_\gamma\Bigr) =
\sum_\gamma \tilde q_\alpha(\tilde x_\gamma) + \sum_{\gamma<\delta}
\tilde b_\alpha(\tilde x_\gamma,\tilde x_\delta).
\]
The map $\tilde q_\alpha$ is a quadratic form on $\gr_\alpha(V)$ with
polar form $\tilde b_\alpha$. It is a graded quadratic
form since $\tilde q_\alpha(V_\gamma)\subseteq F_{2\gamma}$ and
$\tilde b_\alpha(V_\gamma,V_\delta)\subseteq F_{\gamma+\delta}$ for
$\gamma$, $\delta\in \Gamma$. The straightforward verifications are
omitted.

\begin{example}
  \label{ex:notbounded}
  Suppose $v$ is a discrete valuation on $F$ with
  $\Gamma_F=\mathbb{Z}$ and let $V=F^{\oplus2}$ with the hyperbolic
  quadratic form $q(x_1,x_2)=x_1x_2$. Define a norm $\alpha\colon
  V\to\frac12 \mathbb{Z}\cup\{\infty\}$ by
  \[
  \alpha(x_1,x_2) = \min(v(x_1), v(x_2)+{\textstyle\frac12})\qquad
  \text{for $x_1$, $x_2\in F$.}
  \]
  Clearly, we have $v\bigl(q(x)\bigr)\geq2\alpha(x)$ for all $x\in V$;
  but $b\bigl((1,0),(0,1)\bigr) = 1$, so
  \[
  v\bigl(b\bigl((1,0),(0,1)\bigr)\bigr) = 0 < \alpha(1,0) +
  \alpha(0,1) = \textstyle\frac12.
  \]
  Thus, condition~(b) holds but not (a). Also, note that for $x=(1,0)$
  and $x'=(1,1)$ we have $\tilde x = \tilde x'$ in $\gr_\alpha(V)$,
  but $q(x)=0$ and $q(x')=1$, so $\tilde q_\alpha(\tilde x)$ is not
  well-defined. 

  By contrast, the map $\beta\colon V\to\mathbb{Z}\cup\{\infty\}$
  defined by
  \[
  \beta(x_1,x_2)=\min\bigl(v(x_1), v(x_2)\bigr)\qquad \text{for $x_1$,
    $x_2\in F$}
  \]
  is bounded by $q$. The induced quadratic form $\tilde q_\beta$ is
  hyperbolic. 
\end{example}

\begin{example}
  \label{ex:quadext}
  Let $(F,v)$ be an arbitrary valued field and let $K/F$ be an
  arbitrary quadratic extension. Let $N\colon K\to F$ be the norm
  form, which is a quadratic form. We consider two cases:

  \noindent\emph{Case 1: The valuation $v$ extends to two different
    valuations $v_1$, $v_2$ on $K$.}

  Then $K/F$ is not purely inseparable, hence it is a Galois
  extension. Let $\sigma\colon K\to K$ be the nontrivial automorphism
  of $K/F$, so $v_2=v_1\circ\sigma$. We have $v\bigl(N(x)\bigr) =
  v_1(x)+v_2(x)$ for all $x\in V$. The map $\alpha\colon
  K\to\Gamma\cup\{\infty\}$ defined by
  \[
  \alpha(x)=\min\bigl(v_1(x),v_2(x)\bigr) \qquad\text{for $x\in K$}
  \]
  is a $v$-norm on $K$ by \cite[Corollary~1.7]{TWgr}, and it is
  readily checked that 
  $\alpha\prec N$. Let $u\in K$ be such that $v_1(u)\neq v_2(u)$. Then
  $\sigma(u)\neq -u$, hence after scaling by a suitable factor in
  $F^\times$ we may assume $u+\sigma(u)=1$. Since
  $v_1\bigl(\sigma(u)\bigr) = v_2(u)\neq v_1(u)$, this equation yields
  \[
  0 = \min\bigl(v_1(u), v_1\bigl(\sigma(u)\bigr)\bigr) = \alpha(u).
  \]
  It follows that $\tilde b_\alpha(\tilde 1,\tilde u)=\tilde 1$, so
  the form $\tilde N_\alpha$ is nonsingular. Moreover, $\tilde
  N_\alpha(\tilde u) =0$, so $\tilde N_\alpha$ is hyperbolic.

  \noindent\emph{Case 2: The valuation $v$ has a unique extension to
    $K$.}

  We again write $v$ for the valuation on $K$ extending $v$. This
  valuation is a $v$-value function on $K$ and it is readily checked
  that $v\prec N$; in fact $v\bigl(N(x)\bigr) = 2v(x)$ for all $x\in
  K$. If $K/F$ is immediate (i.e., $\overline{K}=\overline{F}$ and
  $\Gamma_K=\Gamma_F$), then $\gr_v(K)=\gr(F)$, so $v$ is not a
  norm. Otherwise, $\gr_v(K)$ is a quadratic extension of $\gr(F)$,
  and $\tilde N_\alpha$ is the norm form of that extension. It is
  nonsingular if and only if $K/F$ is tame, i.e., either $K/F$ is
  totally ramified and $\charac\overline{F}\neq2$, or
  $\overline{K}/\overline{F}$ is a separable quadratic extension
  (i.e., $K/F$ is inertial).
\end{example}

This last example suggests the following definition:

\begin{definition}
  Let $(F,v)$ be an arbitrary valued field and $(V,q)$ be a quadratic
  space over $F$. A $v$-norm $\alpha$ on $V$ is called a \emph{tame norm
    compatible with $q$} if $\alpha\prec q$ and the induced quadratic
  form $\tilde q_\alpha$ on $\gr_\alpha(V)$ is nonsingular.
\end{definition}

For any quadratic space $(V,q)$ and any $v$-value function $\alpha$ on
$V$ such that $\alpha\prec q$, it is clear that $\tilde
b_\alpha(\tilde x, \eta)=0$ for all $\eta\in \gr_\alpha(V)$ if
$b(x,y)=0$ for all $y\in V$. Therefore, tame compatible norms exist
only for nonsingular forms. If $\charac\overline{F}\neq2$, the tame
norms compatible with a quadratic form are exactly the norms that are
compatible with its polar form, in the terminology of \cite{RTW}; see
\cite[Remark~3.2]{RTW}. In that case, for every nonsingular quadratic
form there is a tame compatible norm, see
\cite[Corollary~3.6]{RTW}. If $\charac\overline{F}=2$, the tame norms
compatible with a quadratic form $q$ are those that are compatible
with its polar form and that moreover satisfy condition~(b):
$v\bigl(q(x)\bigr)\geq2\alpha(x)$ for all $x\in V$. There are
nonsingular forms for which there is no tame compatible norm, for
instance the norm forms of totally ramified quadratic extensions of
Henselian dyadic fields: see Theorem~\ref{thm:main}.

\begin{lem}
  \label{lem:sum}
  Let $(V_1,q_1)$ and $(V_2,q_2)$ be quadratic spaces over $F$, and
  let $\alpha_1$, $\alpha_2$ be tame $v$-norms on $V_1$, $V_2$ that
  are compatible with $q_1$ and $q_2$ respectively. Define
  $\alpha_1\oplus\alpha_2\colon V_1\oplus V_2\to \Gamma\cup\{\infty\}$
  by
  \[
  (\alpha_1\oplus\alpha_2)(x_1,x_2) = \min\bigl(\alpha_1(x_1),
  \alpha_2(x_2)\bigr)\qquad \text{for $x_1\in V_1$ and $x_2\in V_2$.}
  \]
  Then $\alpha_1\oplus\alpha_2$ is a tame $v$-norm on $V_1\oplus V_2$
  compatible with $q_1\perp q_2$, and there is a canonical
  identification of graded quadratic spaces
  \[
  (\gr_{\alpha_1\oplus\alpha_2}(V_1\oplus V_2), \tilde{(q_1\perp
    q_2)}_{\alpha_1\oplus\alpha_2}) =
  (\gr_{\alpha_1}(V_1),\tilde{q_1}_{\alpha_1}) \perp
  (\gr_{\alpha_2}(V_2), \tilde{q_2}_{\alpha_2}).
  \]
\end{lem}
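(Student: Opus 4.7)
The proof breaks naturally into four tasks: (1) check that $\alpha:=\alpha_1\oplus\alpha_2$ is a $v$-norm; (2) identify the associated graded $\gr(F)$-vector space with $\gr_{\alpha_1}(V_1)\oplus\gr_{\alpha_2}(V_2)$; (3) verify that $\alpha\prec q_1\perp q_2$ and compute the induced graded form; (4) deduce tameness. None of these steps is individually deep, and the lemma is essentially a compatibility statement; the one place a little care is needed is in step~(3), where one must make sure that the cross terms in the polar form vanish for the right reason.

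For step~(1), the three axioms of a $v$-value function transfer from $\alpha_1$ and $\alpha_2$ to their minimum by routine verification: positivity is immediate, homogeneity under scaling by $\lambda\in F$ follows since $v(\lambda)$ adds to each coordinate, and the ultrametric inequality reduces, via $\min(a+c,b+d)\geq \min(a,b)+\min(c,d)$-style manipulations, to the individual ultrametric inequalities for $\alpha_1$ and $\alpha_2$. To exhibit a splitting base, take bases $(e_i)$ of $V_1$ and $(f_j)$ of $V_2$ that split $\alpha_1$ and $\alpha_2$ respectively; then $(e_i,0)\cup(0,f_j)$ splits $\alpha$, as follows at once from the defining formula of $\alpha$. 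For step~(2), the crucial observation is that $(x_1,x_2)\in(V_1\oplus V_2)^{\geq\gamma}$ if and only if $x_1\in V_1^{\geq\gamma}$ and $x_2\in V_2^{\geq\gamma}$, and similarly with strict inequalities, so
\[
(V_1\oplus V_2)^{\geq\gamma}=V_1^{\geq\gamma}\oplus V_2^{\geq\gamma},\qquad
(V_1\oplus V_2)^{>\gamma}=V_1^{>\gamma}\oplus V_2^{>\gamma}.
\]
Passing to quotients degree by degree and summing yields the desired canonical identification of graded $\gr(F)$-vector spaces. Note also that the dimension check $\dim_{\gr(F)}\gr_\alpha(V_1\oplus V_2)=\dim_F(V_1\oplus V_2)$ is automatic from the corresponding equalities for $\alpha_1$ and $\alpha_2$, reconfirming that $\alpha$ is a norm.

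For step~(3), write $q=q_1\perp q_2$ with polar form $b$; orthogonality of the direct sum gives $q(x_1,x_2)=q_1(x_1)+q_2(x_2)$ and $b\bigl((x_1,x_2),(y_1,y_2)\bigr)=b_1(x_1,y_1)+b_2(x_2,y_2)$. Condition~(b) is then immediate from
\[
v\bigl(q(x_1,x_2)\bigr)\geq \min\bigl(v(q_1(x_1)),v(q_2(x_2))\bigr)\geq \min\bigl(2\alpha_1(x_1),2\alpha_2(x_2)\bigr)=2\alpha(x_1,x_2),
\]
and condition~(a) follows by the analogous estimate using $\min(a+b,c+d)\geq \min(a,c)+\min(b,d)$. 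To compare the induced graded forms, it suffices by bilinearity/quadraticity to evaluate them on homogeneous elements of the form $(\tilde x_1,\tilde x_2)$ in $(V_1\oplus V_2)_\gamma=(V_1)_\gamma\oplus (V_2)_\gamma$: if both components are nonzero, then both $\alpha_i(x_i)=\gamma$ and reducing $q_1(x_1)+q_2(x_2)$ modulo $F^{>2\gamma}$ recovers $\widetilde{q_1}_{\alpha_1}(\tilde x_1)+\widetilde{q_2}_{\alpha_2}(\tilde x_2)$; if only $\tilde x_1\neq 0$, then $v(q_2(x_2))\geq 2\alpha_2(x_2)>2\gamma$ and the $q_2$-contribution is killed. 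This matches $\widetilde{q_1}_{\alpha_1}\perp\widetilde{q_2}_{\alpha_2}$ exactly.

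Finally, for step~(4), under the identification of step~(2) the polar form $\tilde b_\alpha$ becomes the orthogonal sum of the polar forms $\tilde{b_1}_{\alpha_1}$ and $\tilde{b_2}_{\alpha_2}$, each of which is nondegenerate by hypothesis; an orthogonal sum of nondegenerate bilinear forms is nondegenerate, so $\tilde q_\alpha$ is nonsingular and $\alpha$ is tame. The main (mild) obstacle is simply bookkeeping in step~(3): one has to handle the three cases $\alpha_1(x_1)<\alpha_2(x_2)$, $\alpha_1(x_1)=\alpha_2(x_2)$, and $\alpha_1(x_1)>\alpha_2(x_2)$ uniformly, which is taken care of by the convention $\tilde 0=0$ combined with the observation that terms with strict inequality lie in strictly higher filtration degrees and therefore vanish after the relevant reduction.
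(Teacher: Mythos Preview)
Your proof is correct and complete. The paper itself omits the proof entirely, stating only ``The easy proof is omitted; see \cite[Example~3.7(iii)]{RTW},'' so your argument is precisely the kind of routine verification the authors had in mind; there is nothing to compare against beyond noting that you have supplied the details they left to the reader.
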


The easy proof is omitted; see \cite[Example~3.7(iii)]{RTW}.

\begin{lem}
  \label{lem:hyperb}
  Let $(V,q)$ be a quadratic space over $F$ and let $\alpha$ be a tame
  $v$-norm on $V$ compatible with $q$. The Witt indices of $q$ and
  $\tilde q_\alpha$ are related by
  \[
  \mathfrak{i}_0(q)\leq \mathfrak{i}_0(\tilde q_\alpha).
  \]
  In particular, if $q$ is hyperbolic, then
  $\tilde q_\alpha$ is hyperbolic.
\end{lem}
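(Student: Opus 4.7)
The plan is to transport a maximal totally isotropic subspace of $(V,q)$ into $\gr_\alpha(V)$ via the norm structure and show that its image is still totally isotropic, of the same dimension, with respect to $\tilde q_\alpha$.

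Let $U\subseteq V$ be totally isotropic of dimension $m:=\mathfrak{i}_0(q)$, so $q(u)=0$ and $b(u,u')=0$ for all $u,u'\in U$. The first step is to observe that the restriction $\alpha|_U$ is again a $v$-norm on $U$; this is a standard fact on norms on valued vector spaces, proved in \cite{RTW}. Choose a base $(u_1,\ldots,u_m)$ of $U$ that splits $\alpha|_U$. By \cite[Corollary~2.3]{RTW}, the homogeneous elements $\tilde u_1,\ldots,\tilde u_m\in \gr_\alpha(V)$ are $\gr(F)$-linearly independent, hence span a graded $\gr(F)$-subspace $\tilde U\subseteq \gr_\alpha(V)$ of dimension~$m$.

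Next, since $q(u_i)=0$ and $b(u_i,u_j)=0$ for all $i,j$, the definitions of $\tilde q_\alpha$ and $\tilde b_\alpha$ immediately yield $\tilde q_\alpha(\tilde u_i)=p_{2\alpha(u_i)}\bigl(q(u_i)\bigr)=0$ and $\tilde b_\alpha(\tilde u_i,\tilde u_j)=p_{\alpha(u_i)+\alpha(u_j)}\bigl(b(u_i,u_j)\bigr)=0$. Since $\tilde b_\alpha$ is $\gr(F)$-bilinear and since $\tilde q_\alpha$ on an arbitrary element is computed from its values on homogeneous components plus cross terms in $\tilde b_\alpha$, it follows that $\tilde q_\alpha$ vanishes identically on $\tilde U$. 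Therefore $\tilde U$ is a totally isotropic graded subspace of $\gr_\alpha(V)$, and $\mathfrak{i}_0(\tilde q_\alpha)\geq m=\mathfrak{i}_0(q)$.

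For the last assertion, if $q$ is hyperbolic then $\dim_F V=2\mathfrak{i}_0(q)$. Since $\alpha$ is a $v$-norm we have $\dim_{\gr(F)}\gr_\alpha(V)=\dim_F V$, and since $\alpha$ is tame compatible with $q$ the form $\tilde q_\alpha$ is nonsingular. The chain of inequalities
\[
\dim_{\gr(F)}\gr_\alpha(V)=2\mathfrak{i}_0(q)\leq 2\mathfrak{i}_0(\tilde q_\alpha)\leq \dim_{\gr(F)}\gr_\alpha(V)
\]
is therefore forced to be equalities, so $\tilde q_\alpha$ has maximal Witt index among nonsingular forms of its dimension, and is hyperbolic.

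The main obstacle is the opening step: one really does need that the restriction of a $v$-norm to a subspace is again a $v$-norm, in order to guarantee that the image of $U$ in $\gr_\alpha(V)$ has the expected $\gr(F)$-dimension $m$. This is a non-trivial but standard fact from the theory of norms on valued vector spaces, and we would invoke it from \cite{RTW} rather than reproving it. The rest of the argument reduces to unwinding the definitions of $\tilde q_\alpha$ and $\tilde b_\alpha$.
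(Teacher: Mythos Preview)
Your proof is correct and follows the same approach as the paper: pass a totally isotropic subspace $U$ to the graded subspace $\gr_\alpha(U)\subseteq\gr_\alpha(V)$, check that $\tilde q_\alpha$ vanishes there, and use $\dim\gr_\alpha(U)=\dim U$. The paper compresses all of this into a single sentence, while you spell out the splitting-base argument and the hyperbolic dimension count explicitly; the key nontrivial input (that $\alpha\rvert_U$ is still a norm, equivalently $\dim\gr_\alpha(U)=\dim U$) is used implicitly in the paper as well.
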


\begin{proof}
  If $U\subseteq V$ is a totally $q$-isotropic subspace, then
  $\gr_\alpha(U)\subseteq \gr_\alpha(V)$ is a 
  totally $\tilde q_\alpha$-isotropic subspace with $\dim\gr_\alpha(U)
  = \dim U$.
\end{proof}

Let $I_q(F)$ denote the quadratic Witt group of $F$, consisting of
Witt-equivalence classes of even-dimensional nonsingular quadratic forms;
see \cite[\S8.B]{EKM}. From Lemma~\ref{lem:sum}, it follows that the
Witt-equivalence classes of even-dimensional nonsingular quadratic forms
that admit a tame compatible norm form a subgroup of $I_q(F)$. We let
$I_{qt}(F)$ denote this subgroup, which we call the \emph{tame
  quadratic Witt group of $F$}.

\begin{prop}
  \label{prop:residue}
  There is a well-defined group epimorphism
  \[
  \partial\colon I_{qt}(F) \to I_q\bigl(\gr(F)\bigr)
  \]
  that carries the Witt class of any even-dimensional nonsingular
  quadratic form 
  $q$ with a tame compatible norm $\alpha$ to the
  Witt class of $\tilde q_\alpha$.
\end{prop}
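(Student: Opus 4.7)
The plan is to verify, in order, that $\partial$ is well-defined on Witt classes, that it is a group homomorphism, and that it is surjective.

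For well-definedness I would use the standard ``sum with the negative'' trick. Let $(V_1,q_1)$ and $(V_2,q_2)$ be Witt-equivalent even-dimensional nonsingular quadratic spaces over $F$ with tame compatible norms $\alpha_1$ and $\alpha_2$. First note that $\alpha_2$ is also a tame compatible norm for $-q_2$: conditions (a) and (b) involve only valuations of $b$ and $q$ and so are unaffected by a sign change, and $\tilde{(-q_2)}_{\alpha_2} = -\tilde{(q_2)}_{\alpha_2}$ remains nonsingular. By Lemma~\ref{lem:sum}, $\alpha_1 \oplus \alpha_2$ is then a tame compatible norm on $q_1 \perp (-q_2)$, with associated graded form $\tilde{(q_1)}_{\alpha_1} \perp \bigl(-\tilde{(q_2)}_{\alpha_2}\bigr)$. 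Since $q_1 \perp (-q_2)$ is hyperbolic, Lemma~\ref{lem:hyperb} makes this graded form hyperbolic in $I_q(\gr(F))$, and hence $[\tilde{(q_1)}_{\alpha_1}] = [\tilde{(q_2)}_{\alpha_2}]$. Taking $q_1=q_2$ with two different norms gives independence of the norm, while letting the forms also vary yields independence of the Witt representative. The homomorphism property is then immediate from Lemma~\ref{lem:sum}.

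For surjectivity I would apply Theorem~\ref{thm:gradedWitt} to reduce to lifting each component $q_\Delta$ over $\overline{F} = \gr(F)_0$, for $\Delta \in \Gamma_F/2\Gamma_F$, to a quadratic form over $F$ with a tame compatible norm having the prescribed residue. Concretely: for each coset $\Delta$, choose $\pi_\delta \in F^\times$ with $v(\pi_\delta) = \delta$, fix a basis of the $\overline{F}$-space carrying $q_\Delta$, lift the Gram matrix entry-by-entry to the valuation ring, and let $q'_\Delta$ be the resulting quadratic form on an $F$-vector space $V_\Delta$. Nonsingularity lifts because the determinant of the polar Gram matrix has nonzero residue. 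Equip $V_\Delta$ with the basis-splitting norm $\alpha_\Delta(x) = \delta/2 + \min_i v(x_i)$; then $\pi_\delta q'_\Delta$ together with $\alpha_\Delta$ is a tame compatible norm whose residue is an isomorphic copy of $\pi_\delta \cdot q_\Delta$ sitting in degree $\delta/2$. Taking the orthogonal sum over $\Delta$ via Lemma~\ref{lem:sum} produces a form whose residue is the prescribed graded form.

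The main obstacle is the independence of the choice of norm, which is precisely what Lemmas~\ref{lem:sum} and~\ref{lem:hyperb} are tailored to handle in combination. The only other mild subtlety is checking that the lifted Gram matrix gives a form whose polar form is nondegenerate even in residue characteristic~$2$; but nonsingularity is an invertibility condition on the polar Gram matrix, and invertibility modulo the maximal ideal forces invertibility over $F$, so this lifts automatically. All remaining verifications are routine manipulations of the constructions introduced in \S\ref{sec:norms}.
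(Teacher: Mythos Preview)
Your argument for well-definedness is exactly the paper's: form $q\perp(-q)$ with the norm $\alpha\oplus\beta$, apply Lemma~\ref{lem:sum} and Lemma~\ref{lem:hyperb}.

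For surjectivity the paper takes a slightly different route. Rather than invoking Theorem~\ref{thm:gradedWitt} to pass to the residue field and then lifting an $\overline{F}$-Gram matrix, the paper observes that $I_q(\gr(F))$ is generated by \emph{binary} nonsingular graded forms and lifts those directly: given homogeneous $\varepsilon_1,\varepsilon_2$ with $b_\varphi(\varepsilon_1,\varepsilon_2)=1$, lift $\varphi(\varepsilon_i)$ to $a_i\in F^\times$ and take $q=a_1x_1^2+x_1x_2+a_2x_2^2$ with the splitting norm $\alpha(e_1x_1+e_2x_2)=\min\bigl(\tfrac12v(a_1)+v(x_1),\tfrac12v(a_2)+v(x_2)\bigr)$. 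Your approach is correct as well; the Gram-matrix lift works because the polar determinant reduces to a nonzero element of $\overline{F}$, and the splitting norm $\alpha_\Delta$ is tame since its residue form is $\tilde\pi_\delta\cdot q_\Delta$ extended to $\gr(F)$. The trade-off is that the paper's argument is self-contained (it does not presuppose Theorem~\ref{thm:gradedWitt}), while yours is a bit more conceptual in that it lets the structure theorem absorb the bookkeeping of where the graded pieces sit. One small remark: when you say ``Gram matrix'' you should make explicit that you mean the upper-triangular coefficient matrix of $q_\Delta$ (so that the diagonal values $q_\Delta(e_i)$ are retained), not the symmetric matrix of the polar form, which in residue characteristic~$2$ would lose that information.
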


\begin{proof}
  If $\charac\overline{F}\neq2$, this is shown in
  \cite[Theorem~3.11]{RTW} (and holds for odd-dimensional nondegenerate
  quadratic forms as well). The arguments also hold if
  $\charac\overline{F}=2$: to see that the Witt equivalence class of
  $\tilde q_\alpha$ does not depend on the choice of the tame
  compatible norm $\alpha$, suppose $\beta$ is another tame compatible
  norm. Then $\alpha\oplus\beta$ is a tame norm compatible with the
  hyperbolic form $q\perp-q$, hence Lemmas~\ref{lem:sum} and
  \ref{lem:hyperb} show that $\tilde q_\alpha\perp-\tilde q_\beta$ is
  hyperbolic. 

  To prove surjectivity of $\partial$ when $\charac\overline{F}=2$, it
  suffices to show that the 
  Witt class of every nonsingular binary form over $\gr(F)$ is in the
  image. Let $\grV$ be a $2$-dimensional graded vector space over
  $\gr(F)$, and let $\varphi\colon\grV\to\gr(F)$ be a nonsingular
  quadratic form. Since the corresponding polar form $b_\varphi$ is
  nonsingular, we may find homogeneous vectors $\varepsilon_1$,
  $\varepsilon_2\in\grV$ such that
  $b_\varphi(\varepsilon_1,\varepsilon_2)=1$. These vectors form a
  base of $\grV$. If $\varphi(\varepsilon_1)=0$ or
  $\varphi(\varepsilon_2)=0$, then $\varphi$ is hyperbolic hence its
  Witt class is zero, which lies in the image of $\partial$. If
  $\varphi(\varepsilon_1)$, $\varphi(\varepsilon_2)$ are nonzero, then
  we may find $a_1$, $a_2\in F^\times$ such that
  $\varphi(\varepsilon_1)=\tilde a_1$ and
  $\varphi(\varepsilon_2)=\tilde a_2$. Note that the condition
  $b_\varphi(\varepsilon_1,\varepsilon_2)=1$ implies that
  $\deg\varepsilon_1+\deg\varepsilon_2=0$, hence
  $v(a_1)+v(a_2)=0$. Consider a $2$-dimensional $F$-vector space $U$
  with base $e_1$, $e_2$ and quadratic form
  \[
  q(e_1x_1+e_2x_2)=a_1x_1^2+x_1x_2+a_2x_2^2\qquad\text{for $x_1$,
    $x_2\in F$.}
  \]
  Straightforward computations show that $q$ is a nonsingular
  quadratic form and that the map $\alpha\colon
  U\to\Gamma\cup\{\infty\}$ defined by
  \[
  \alpha(e_1x_1+e_2x_2)=\min\bigl(\textstyle{\frac12} v(a_1)+v(x_1),
  \textstyle{\frac12}v(a_2)+v(x_2)\bigr) \qquad\text{for $x_1$,
    $x_2\in F$}
  \]
  is a tame norm compatible with $q$, such that $(\gr_\alpha(U),\tilde
  q_\alpha) \simeq (\grV,\varphi)$ under the map $e_1x_1+e_2x_2\mapsto
  \varepsilon_1\tilde x_1+\varepsilon_2\tilde x_2$. Thus, the Witt
  class of $\varphi$ is in the image of $\partial$.
\end{proof}

Our next goal is to show that $\partial$ is an isomorphism when the
valuation $v$ on $F$ is Henselian.

\begin{lem}
  \label{lem:Hensel}
  Let $(V,q)$ be a quadratic space over $F$. Suppose $v$ is Henselian
  and $x$, $y\in V$ are such that
  \[
  2 v\bigl(b(x,y)\bigr) < v\bigl(q(x)\bigr) + v\bigl(q(y)\bigr).
  \]
  Then there is a $q$-isotropic vector in the span of $x$ and $y$.
\end{lem}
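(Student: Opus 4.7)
The plan is to exhibit a $q$-isotropic vector of the form $x + \lambda y$ by solving the quadratic equation
\[
q(x+\lambda y) = q(y)\lambda^2 + b(x,y)\lambda + q(x) = 0
\]
in $\lambda\in F$, extracting the root via Hensel's lemma.

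First I would dispose of the degenerate cases. The hypothesis $2v(b(x,y)) < v(q(x))+v(q(y))$ forces $b(x,y)\neq 0$, for otherwise the left-hand side is $\infty$. If $q(x)=0$ then $x$ (which is nonzero since $b(x,y)\neq 0$) is the desired isotropic vector, and symmetrically for $q(y)$. So assume $q(x)$, $q(y)$, $b(x,y)$ are all nonzero.

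The key manipulation is to normalize. Set $A = b(x,y)/q(y)$ and $B = q(x)/q(y)$, reducing the problem to finding a root of $T^2 + AT + B$; then substitute $T = AS$ and divide by $A^2$ (permissible since $A\neq 0$) to obtain
\[
g(S) = S^2 + S + B/A^{2}.
\]
The hypothesis translates \emph{exactly} to $v(B/A^2) > 0$, so $B/A^2$ lies in the maximal ideal $\mathfrak{m}$. Thus $g\in\mathcal{O}[S]$ and its reduction $\overline{g}(S) = S(S+1)\in\overline{F}[S]$ has the simple root $\overline{0}$ (valid regardless of the residue characteristic, since $\overline{1}\neq\overline{0}$). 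Since $v$ is Henselian, Hensel's lemma lifts this to a root $\mu\in\mathfrak{m}$, and $\lambda := A\mu$ gives $q(x+\lambda y) = q(y)A^2 g(\mu) = 0$. A brief final check rules out $x+\lambda y = 0$: if $\lambda=0$ then $\mu=0$, forcing $B=0$ and hence $q(x)=0$, contrary to assumption; and $x+\lambda y = 0$ with $\lambda\neq 0$ would make $y$ proportional to $x$, which via $b(x,x)=2q(x)$ either kills $b(x,y)$ (in characteristic $2$) or makes the hypothesis an equality rather than a strict inequality.

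The heart of the argument, and the only point requiring insight rather than routine manipulation, is the choice of the substitution $T=AS$: it uniformly converts the quadratic into the form $S^2 + S + (\text{element of }\mathfrak{m})$, whose reduction $S(S+1)$ has simple roots in every characteristic. This is precisely what the \emph{strict} inequality in the hypothesis is engineered to deliver, so no obstacle remains once the substitution is in hand.
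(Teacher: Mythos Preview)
Your proof is correct and follows essentially the same route as the paper: both reduce to the polynomial $S^{2}+S+q(x)q(y)b(x,y)^{-2}$, observe that the constant term lies in the maximal ideal by the hypothesis, and lift the simple root of $S(S+1)$ via Hensel's lemma. The only cosmetic difference is that the paper rescales $y$ to $z=y\,q(x)b(x,y)^{-1}$ and expands $q(x\lambda+z)$, whereas you keep $y$ and perform the substitution $T=AS$; the resulting polynomial is identical, and your handling of the degenerate cases $q(x)=0$, $q(y)=0$ and of the nonvanishing of the isotropic vector is slightly more explicit than the paper's.
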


\begin{proof}
  This was observed by several authors, in particular Springer
  \cite[Proposition~1]{S} (see also \cite[Lemma~(2.2)]{Tietze}). We
  give a proof for completeness. First, 
  note that if $z_1$, $z_2\in V$ are 
  multiple of each other, then
   \begin{equation}
    \label{eq:mult}
    2v\bigl(b(z_1,z_2)\bigr) = v\bigl(q(z_1)\bigr) + v\bigl(q(z_2)\bigr) +
    2v(2) \geq v\bigl(q(z_1)\bigr) + v\bigl(q(z_2)\bigr).
  \end{equation}
  In particular, under the hypotheses of the lemma, $x$ and $y$ are
  not multiple of each other. Set 
  $z=yq(x)b(x,y)^{-1}$, so $b(x,z)=q(x)$. For all $\lambda\in F$ we have
  \begin{equation}
    \label{eq:q}
    q(x\lambda +z) = q(x) (\lambda^2+\lambda + q(x)q(y)b(x,y)^{-2}).
  \end{equation}
  The second factor on the right side is a polynomial $P(\lambda)$
  with coefficients in the valuation ring of $F$. Its image in
  $\overline{F}[\lambda]$ is $\lambda(\lambda+1)$. By Hensel's Lemma,
  $P(\lambda)$ has a root $\lambda_0\in F$. The vector $x\lambda_0+z$
  is nonzero since $x$ and $y$ are not multiple of each other, and it
  is an isotropic vector of $q$.
\end{proof}

\begin{thm}
  \label{thm:Wittindex}
  Let $(V,q)$ be a quadratic space over $F$. Assume the valuation $v$
  on $F$ is Henselian. Suppose $\alpha\colon
  V\to\Gamma\cup\{\infty\}$ is a tame norm compatible with $q$, and
  consider the induced quadratic form $\tilde q_\alpha$ on
  $\gr_\alpha(V)$. The forms $q$ and $\tilde q_\alpha$ have the same
  Witt index:
  \[
  \mathfrak{i}_0(q) = \mathfrak{i}_0(\tilde q_\alpha).
  \]
\end{thm}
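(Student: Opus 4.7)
My plan is to supplement Lemma~\ref{lem:hyperb}, which already gives $\mathfrak{i}_0(q)\le \mathfrak{i}_0(\tilde q_\alpha)$, by proving the reverse inequality via induction on $r=\mathfrak{i}_0(\tilde q_\alpha)$. The case $r=0$ is immediate from Lemma~\ref{lem:hyperb}, so assume $r\ge 1$. The whole inductive step rests on producing, inside $V$, a $q$-hyperbolic plane $H$ whose graded associated $\gr_\alpha(H)$ is a graded hyperbolic plane in $\gr_\alpha(V)$, together with a compatible orthogonal decomposition $V=H\perp H^\perp$ that passes cleanly through $\gr_\alpha$; then the induction hypothesis applies to $(H^\perp,q|_{H^\perp},\alpha|_{H^\perp})$.

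To lift an isotropic vector, I start with a nonzero isotropic vector $\eta$ of $\tilde q_\alpha$; using the canonical decomposition \eqref{eq:candec}, $\eta$ may be assumed homogeneous, say $\eta=\tilde x$ with $\alpha(x)=\gamma$ and $v(q(x))>2\gamma$. The nondegeneracy of $\tilde b_\alpha$ furnishes a homogeneous $\tilde y$ with $\tilde b_\alpha(\tilde x,\tilde y)\neq 0$, i.e.\ $v(b(x,y))=\alpha(x)+\alpha(y)=\gamma+\delta$. The inequalities $v(q(x))>2\gamma$ and $v(q(y))\ge 2\delta$ then give $2v(b(x,y))<v(q(x))+v(q(y))$, so Lemma~\ref{lem:Hensel} yields an isotropic $q$-vector $u=x\lambda_0+y\,q(x)/b(x,y)$. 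The two Hensel roots reduce to $0$ and $-1$; I would pick $\lambda_0$ reducing to $-1$, so $v(\lambda_0)=0$, which forces $\alpha(u)=\gamma$ and makes $\tilde u$ a nonzero scalar multiple of $\eta$.

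To build the hyperbolic plane, I would next find a homogeneous $\xi\in\gr_\alpha(V)$ of degree $-\gamma$ with $\tilde q_\alpha(\xi)=0$ and $\tilde b_\alpha(\tilde u,\xi)=1$ (first pick $\zeta$ with $\tilde b_\alpha(\tilde u,\zeta)=1$ by nondegeneracy, then replace $\zeta$ by $\zeta-\tilde u\cdot\tilde q_\alpha(\zeta)$). Lift $\xi$ to $z\in V$ with $\tilde z=\xi$, rescale so that $b(u,z)=1$, and set $w=z-u\,q(z)$. Using $q(u)=b(u,u)=0$, direct computation gives $q(w)=0$, $b(u,w)=1$, $\alpha(w)=-\gamma$, and $\tilde w=\xi$. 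Then $H=Fu\oplus Fw$ is a $q$-hyperbolic plane, and $\gr_\alpha(H)=\gr(F)\tilde u\oplus \gr(F)\tilde w$ is a graded hyperbolic plane inside $\gr_\alpha(V)$.

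The main obstacle is the splitting step: verifying that $\alpha$ decomposes across $V=H\perp H^\perp$ strongly enough to induce $\gr_\alpha(V)=\gr_\alpha(H)\perp\gr_\alpha(H^\perp)$. For any $v\in V$, write $v=v_H+v_{H^\perp}$ with $v_H=w\,b(u,v)+u\,b(w,v)$. Condition~(a) in $\alpha\prec q$ gives $v(b(u,v))\ge \gamma+\alpha(v)$ and $v(b(w,v))\ge -\gamma+\alpha(v)$, so $\alpha(v_H)\ge\alpha(v)$ and hence $\alpha(v_{H^\perp})\ge\alpha(v)$. This filtration-respecting decomposition implies the graded splitting (a dimension count via \cite[Corollary~2.3]{RTW} then shows that $\alpha|_H$ and $\alpha|_{H^\perp}$ are themselves norms), and both restrictions are clearly tame compatible. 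Since a graded hyperbolic plane contributes $1$ to the Witt index, the restriction $\widetilde{(q|_{H^\perp})}_{\alpha|_{H^\perp}}$ has Witt index $r-1$; by induction $\mathfrak{i}_0(q|_{H^\perp})=r-1$, and adding back the split hyperbolic plane gives $\mathfrak{i}_0(q)=r=\mathfrak{i}_0(\tilde q_\alpha)$.
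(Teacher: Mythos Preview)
Your proof is correct and follows essentially the same strategy as the paper's: lift a homogeneous isotropic vector of $\tilde q_\alpha$ via Lemma~\ref{lem:Hensel} to a $q$-hyperbolic plane, split it off compatibly with $\alpha$, and induct. The only differences are cosmetic---the paper takes $W=\mathrm{span}(x,y)$ directly and invokes \cite[Proposition~3.8]{RTW} for the splitting $\alpha=\alpha|_W\oplus\alpha|_{W^\perp}$, whereas you construct an explicit hyperbolic pair $(u,w)$ and verify the splitting by hand; your version is more self-contained but the underlying argument is the same.
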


\begin{proof}
  Lemma~\ref{lem:hyperb} already yields $\mathfrak{i}_0(q) \leq
  \mathfrak{i}_0(\tilde q_\alpha)$. To prove the reverse inequality,
  we argue by induction on $\dim V$, 
  and copy the proof of \cite[Proposition~4.3]{RTW}. If $\tilde
  q_\alpha$ is isotropic, we may find a homogeneous isotropic vector
  $\tilde x$ by taking the homogeneous component of smallest degree of
  an arbitrary isotropic vector. Since $\alpha$ is a tame norm
  compatible with $q$, the polar form $\tilde b_\alpha$ is
  nondegenerate. Therefore, we may find a homogeneous vector $\tilde y$
  such that $\tilde b_\alpha(\tilde x,\tilde y)\neq0$. Let $W\subseteq
  V$ be the subspace spanned by $x$ and $y$, so
  $\gr_\alpha(W)\subseteq \gr_\alpha(V)$ is the graded subspace
  spanned by $\tilde x$ and $\tilde y$. We claim that $W$ is a
  hyperbolic plane. To see this, observe that the equation $\tilde
  q_\alpha(\tilde x)=0$ yields $v\bigl(q(x)\bigr)>2\alpha(x)$, while
  $\tilde b_\alpha(\tilde x,\tilde y)\neq0$ shows that
  $v\bigl(b(x,y)\bigr) =\alpha(x)+\alpha(y)$. Since
  $v\bigl(q(y)\bigr)\geq2\alpha(y)$, it follows that
  $2v\bigl(b(x,y)\bigr) < v\bigl(q(x)\bigr) +
  v\bigl(q(y)\bigr)$. Therefore, Lemma~\ref{lem:Hensel} shows that $W$
  contains a 
  $q$-isotropic vector. The restriction of $q$ to $W$ is nonsingular
  since $b(x,y)\neq0$, hence $W$ is a hyperbolic plane.

  Let $q'$ denote the restriction of $q$ to $W^\perp$, so
  $\mathfrak{i}_0(q) = 1+\mathfrak{i}_0(q')$. By the choice of
  $x$ and $y$, the bilinear form $\tilde b_\alpha$ is nondegenerate on
  $\gr_\alpha(W)$. Therefore, the norm $\alpha\rvert_W$ is compatible
  with the restriction of the polar form $b\rvert_W$, and
  \cite[Proposition~3.8]{RTW} shows that $W^\perp$ is a splitting
  complement of $W$ with respect to $\alpha$, i.e.,
  $\alpha=\alpha\rvert_W\oplus \alpha\rvert_{W^\perp}$. Thus,
  \[
  \gr_\alpha(V) = \gr_\alpha(W) \perp \gr_\alpha(W^\perp),
  \]
  hence $\mathfrak{i}_0(\tilde q_\alpha) = 1+ \mathfrak{i}_0(\tilde
  q'_\alpha)$ since $\gr_\alpha(W)$ is a hyperbolic plane. The
  induction hypothesis yields $\mathfrak{i}_0(q') \geq
  \mathfrak{i}_0(\tilde q'_\alpha)$, hence $\mathfrak{i}_0(q) \geq
  \mathfrak{i}_0(\tilde q_\alpha)$ and the proof is complete.
\end{proof}

\begin{cor}
  \label{cor:deliso}
  The homomorphism $\partial$ of Proposition~\ref{prop:residue} is an
  isomorphism if $F$ is Henselian.
\end{cor}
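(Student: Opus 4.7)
The plan is to reduce the proof of Corollary~\ref{cor:deliso} entirely to Theorem~\ref{thm:Wittindex}, which is already the hard work. Proposition~\ref{prop:residue} has established that $\partial$ is a well-defined surjective group homomorphism without any Henselian hypothesis, so the only remaining task is to prove injectivity under the assumption that $v$ is Henselian.

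For injectivity, I would start with a Witt class in $I_{qt}(F)$ represented by an even-dimensional nonsingular quadratic form $(V,q)$ equipped with a tame compatible norm $\alpha$, and assume $\partial$ sends it to zero in $I_q(\gr(F))$, so that $\tilde q_\alpha$ is hyperbolic. The goal is then to conclude that $q$ itself is hyperbolic. Since $\alpha$ is a norm (not merely a value function), $\dim_{\gr(F)} \gr_\alpha(V) = \dim_F V$, so $q$ and $\tilde q_\alpha$ have the same (even) dimension, say $2m$. Hyperbolicity of $\tilde q_\alpha$ means $\mathfrak{i}_0(\tilde q_\alpha) = m$. Now Theorem~\ref{thm:Wittindex}, which is the substantive input and requires the Henselian hypothesis, gives $\mathfrak{i}_0(q) = \mathfrak{i}_0(\tilde q_\alpha) = m$, forcing $q$ itself to be hyperbolic. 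Hence the class of $q$ in $I_{qt}(F)$ is zero, which is the desired injectivity.

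There is essentially no obstacle, since the ingredients are already assembled: the existence of tame compatible norms for representatives of any given class (used implicitly by picking $\alpha$), the independence of $\partial$ on the choice of $\alpha$, and the equality of Witt indices together collapse to a one-line argument. The only small care needed is to confirm that ``$\tilde q_\alpha$ represents $0$ in $I_q(\gr(F))$'' really means ``$\tilde q_\alpha$ is hyperbolic'' rather than merely Witt-equivalent to zero with some hyperbolic summand of unspecified dimension; but since the dimensions of $q$ and $\tilde q_\alpha$ coincide, the two notions agree here. Thus Corollary~\ref{cor:deliso} is an immediate consequence of Proposition~\ref{prop:residue} and Theorem~\ref{thm:Wittindex}, and the proof can be written in just a few sentences.
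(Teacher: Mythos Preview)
Your proposal is correct and follows exactly the paper's approach: the paper's proof is just two sentences, citing Proposition~\ref{prop:residue} for surjectivity and Theorem~\ref{thm:Wittindex} for injectivity, and your write-up simply unpacks the latter citation with the dimension comparison $\dim_{\gr(F)}\gr_\alpha(V)=\dim_F V$ that makes the Witt-index equality yield hyperbolicity of $q$.
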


\begin{proof}
  Surjectivity of $\partial$ was shown in
  Proposition~\ref{prop:residue}, and injectivity follows from
  Theorem~\ref{thm:Wittindex}. 
\end{proof}

\section{Tame quadratic forms over Henselian fields}
\label{sec:tame}

In this section, we show that the tame quadratic Witt group of a
Henselian field is the Witt kernel of the scalar extension map to the
maximal tame extension.

Let $F$ be a field with a valuation $v\colon
F\to\Gamma\cup\{\infty\}$. Throughout this section, we assume $v$ is
Henselian. Let $(V,q)$ be a quadratic space over $F$ with polar form
$b$. If $q$ is anisotropic, we define a map $\hatalpha\colon
V\to\Gamma\cup\{\infty\}$ by
\begin{equation}
\label{eq:alpha}
\hatalpha(x)={\textstyle\frac12}v\bigl(q(x)\bigr)\qquad\text{for $x\in
  V$.}
\end{equation}

\begin{prop}
  \label{prop:properties}
  The map $\hatalpha$ is a $v$-value function and $\hatalpha\prec q$.
\end{prop}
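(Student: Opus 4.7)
The plan is to verify the three axioms of a $v$-value function together with the two conditions (a) and (b) of $\hatalpha \prec q$, noting that (b) is built into the definition of $\hatalpha$ as an equality, so the real content lies in the subadditivity axiom~(iii) and the bilinear-form bound~(a). I would handle the easy axioms first and then reduce the two remaining nontrivial conditions to a single application of Lemma~\ref{lem:Hensel}.

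First I would dispose of axioms (i) and (ii). Axiom (i) is exactly where anisotropy enters: if $x\neq 0$ then $q(x)\neq 0$ since $q$ is anisotropic, so $v(q(x))<\infty$ and $\hatalpha(x)<\infty$; the converse and the case $x=0$ are trivial. Axiom (ii) is an immediate consequence of $q(x\lambda)=q(x)\lambda^2$, which gives $\hatalpha(x\lambda)=\frac12 v(q(x))+v(\lambda)=\hatalpha(x)+v(\lambda)$. Condition (b) of $\hatalpha\prec q$ is an equality by the very definition of $\hatalpha$.

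The key step is establishing condition (a), namely $v(b(x,y))\geq \hatalpha(x)+\hatalpha(y)=\frac12\bigl(v(q(x))+v(q(y))\bigr)$. I would argue by contradiction: if this inequality failed for some $x$, $y\in V$, then $2v(b(x,y))<v(q(x))+v(q(y))$, which is exactly the hypothesis of Lemma~\ref{lem:Hensel}. Since $v$ is Henselian, the lemma produces a $q$-isotropic vector in the span of $x$ and $y$, contradicting the anisotropy of $q$. This is the main obstacle conceptually, but the hard work has already been done by Lemma~\ref{lem:Hensel}; the only thing to check is that the hypotheses genuinely apply.

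Finally, I would derive the ultrametric inequality~(iii) from (a). Expanding $q(x+y)=q(x)+q(y)+b(x,y)$ and using $v\bigl(q(x)+q(y)\bigr)\geq\min\bigl(v(q(x)),v(q(y))\bigr)$ together with the bound $v(b(x,y))\geq \frac12\bigl(v(q(x))+v(q(y))\bigr)\geq \min\bigl(v(q(x)),v(q(y))\bigr)$ just established, one gets $v(q(x+y))\geq \min\bigl(v(q(x)),v(q(y))\bigr)$, which after halving yields $\hatalpha(x+y)\geq\min\bigl(\hatalpha(x),\hatalpha(y)\bigr)$. This completes the verification that $\hatalpha$ is a $v$-value function with $\hatalpha\prec q$.
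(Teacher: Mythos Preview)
Your proposal is correct and follows essentially the same approach as the paper's own proof: axioms~(i) and~(ii) and condition~(b) are immediate from the definition and anisotropy, condition~(a) is obtained by contraposition from Lemma~\ref{lem:Hensel}, and the ultrametric inequality~(iii) is then derived from~(a) by expanding $q(x+y)=q(x)+b(x,y)+q(y)$.
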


\begin{proof}
  This was proved by Springer
  \cite[Proposition~1]{S} (and attributed to M.~Eichler) in the case
  where the valuation is discrete and $F$ is complete. Springer's
  arguments hold without change under the more general hypotheses of this
  section. We include the proof for the reader's convenience.

  By definition of $\hatalpha$, we clearly have $\hatalpha(x\lambda) =
  \hatalpha(x)+v(\lambda)$ for $x\in V$ and $\lambda\in F$, and
  $\hatalpha(x)=\infty$ if and only if $x=0$ because $q$ is
  anisotropic. It is also clear from the definition that
  $v\bigl(q(x)\bigr)\geq2\hatalpha(x)$ for all $x\in V$. Thus, it
  only remains to show
  \begin{equation}
    \label{eq:iv}
    v\bigl(b(x,y)\bigr) \geq \hatalpha(x)+\hatalpha(y)\qquad\text{for
      $x$, $y\in V$}
  \end{equation}
  and
  \begin{equation}
    \label{eq:iii}
    \hatalpha(x+y)\geq\min\bigl(\hatalpha(x),\hatalpha(y)\bigr)
    \qquad\text{for $x$, $y\in V$.}
  \end{equation}
  The inequality~\eqref{eq:iv} readily follows from
  Lemma~\ref{lem:Hensel} since $q$ is anisotropic.
  Property~\eqref{eq:iii} follows, since \eqref{eq:iv} implies
  $v\bigl(b(x,y)\bigr)\geq \min\bigl(v\bigl(q(x)\bigr),
  v\bigl(q(y)\bigr)\bigr)$ for $x$, $y\in V$, hence
  \[
  v\bigl(q(x+y)\bigr) = v\bigl(q(x)+b(x,y) + q(y)\bigr) \geq
  \min\bigl(v\bigl(q(x)\bigr), v\bigl(q(y)\bigr)\bigr).
  \]
\end{proof}

Now, consider the following property for an anisotropic quadratic form
$q$ over $F$:
\begin{equation}
  \tag{S}
  v\bigl(b(x,y)\bigr) > \hatalpha(x) + \hatalpha(y) \qquad\text{for all
    nonzero $x$, $y\in V$ such that $\hatalpha(x)=\hatalpha(y)$.}
\end{equation}
Note that this property cannot hold if $\charac\overline{F}\neq2$
since for $x=y$ it implies $v(2)>0$ (see
\eqref{eq:mult}). By~\eqref{eq:iv}, we have 
$v\bigl(b(x,y)\bigr)>2\min\bigl(\hatalpha(x),\hatalpha(y)\bigr)$ if
$\hatalpha(x)\neq\hatalpha(y)$, hence Property~(S) has the following
equivalent formulation: for all nonzero $x$, $y\in V$,
\begin{equation}
  \tag{S'}
  v\bigl(b(x,y)\bigr)>2\min\bigl(\hatalpha(x),\hatalpha(y)\bigr) =
  \min\bigl(v\bigl(q(x)\bigr), v\bigl(q(y)\bigr)\bigr).
\end{equation}

Our goal is to prove that Property~(S) characterizes the quadratic
forms that remain anisotropic over any inertial extension. Thus,
\emph{until the end of this section, except for the last theorem, we
  assume $\charac\overline{F}=2$.} 

We first
consider inertial \emph{quadratic} extensions $K/F$. The residue
extension $\overline{K}/\overline{F}$ is obtained by adjoining to
$\overline{F}$ a root of an irreducible polynomial of the form
$X^2+X+\overline{u}$ for some $\overline{u}\in\overline{F}$, hence
$K=F(\mu)$ where $\mu^2+\mu+u=0$ for some $u\in F$ with $v(u)=0$. The
Henselian valuation on $F$ has a unique extension to $K$, for which we
also use the notation $v$. Since $1$, $\overline{\mu}$ are linearly
independent over $\overline{F}$, we have
\begin{equation}
  \label{eq:vmin}
  v(a+b\mu) = \min\bigl(v(a), v(b)\bigr)\qquad\text{for $a$, $b\in F$.}
\end{equation}

\begin{lem}
  \label{lem:quadext}
  An anisotropic quadratic form satisfies~(S) if and only if it
  remains anisotropic over every inertial quadratic extension.
\end{lem}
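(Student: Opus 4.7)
The plan is to treat each direction separately using the Artin--Schreier description of inertial quadratic extensions in residue characteristic~$2$ recalled just above the lemma: every such extension of $F$ has the form $K=F(\mu)$ with $\mu^2+\mu+u=0$ for some $u\in F$ satisfying $v(u)=0$, chosen so that $X^2+X+\overline{u}$ is irreducible over $\overline{F}$. I would prove the forward implication by contraposition, and the reverse by producing an explicit inertial quadratic extension from a witness to the failure of~(S).

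For the forward direction, suppose $q_K$ is isotropic over some inertial quadratic extension $K=F(\mu)$, and write a nonzero isotropic vector of $V\otimes_F K=V\oplus V\mu$ as $x+y\mu$ with $x,y\in V$. Using $\mu^2=-\mu-u$, the equation $q_K(x+y\mu)=0$ splits along the $F$-basis $\{1,\mu\}$ of $K$ into the two conditions $q(x)=u\,q(y)$ and $b(x,y)=q(y)$. Anisotropy of $q$ over $F$ forces both $x$ and $y$ to be nonzero. Because $v(u)=0$, the first relation yields $\hatalpha(x)=\hatalpha(y)$, and the second then gives $v(b(x,y))=v(q(y))=\hatalpha(x)+\hatalpha(y)$, contradicting~(S).

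For the reverse direction, assume~(S) fails: by Proposition~\ref{prop:properties} this means there exist nonzero $x,y\in V$ with $\hatalpha(x)=\hatalpha(y)$ and $v(b(x,y))=\hatalpha(x)+\hatalpha(y)$. I would first check that $x$ and $y$ are linearly independent, since otherwise \eqref{eq:mult} combined with $v(2)>0$ would force $v(b(x,y))\geq\hatalpha(x)+\hatalpha(y)+v(2)>\hatalpha(x)+\hatalpha(y)$. Setting $s=b(x,y)/q(y)$ and $t=q(x)/q(y)$, the valuation relations just derived show $v(s)=v(t)=0$. A direct expansion yields $q(x+y\mu)=q(y)\bigl(\mu^2+s\mu+t\bigr)$, and the substitution $\mu=s\zeta$ transforms $\mu^2+s\mu+t=0$ into the Artin--Schreier equation $\zeta^2+\zeta+c=0$ with $c=t/s^2$ a unit. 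If this polynomial has a root in $F$, then $q$ is already isotropic, contradicting anisotropy. Otherwise $K=F(\zeta)$ is a quadratic extension, and its residual polynomial $X^2+X+\overline{c}$ must be irreducible over $\overline{F}$; were it reducible, Hensel's lemma---applicable because the derivative $2X+1$ reduces to the unit~$1$---would lift a root back to $F$. Hence $K/F$ is inertial, and $x+s\zeta\, y$ is by construction a nonzero isotropic vector of $q_K$.

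The main obstacle is this reverse direction: the key point is not merely to construct some quadratic extension over which $q$ becomes isotropic, but to verify that the extension so produced is actually inertial. The residue characteristic~$2$ hypothesis enters decisively here, both in putting the relevant equation into Artin--Schreier form and in allowing Hensel's lemma to certify irreducibility of the residual polynomial.
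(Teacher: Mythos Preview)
Your proof is correct and follows essentially the same route as the paper: both directions match, and your Artin--Schreier polynomial $\zeta^2+\zeta+c$ with $c=t/s^2=q(x)q(y)b(x,y)^{-2}$ is exactly the polynomial $\lambda^2+\lambda+q(x)q(y)b(x,y)^{-2}$ that the paper obtains via the substitution $z=yq(x)b(x,y)^{-1}$ from Lemma~\ref{lem:Hensel}. The only difference is cosmetic (the order of the two directions and the choice of rescaling), and you spell out the Hensel argument for inertiality that the paper leaves implicit.
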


\begin{proof}
  If (S) does not hold, then we can find nonzero vectors $x$, $y\in V$
  such that $\hatalpha(x)=\hatalpha(y)$ and $v\bigl(b(x,y)\bigr) =
  \hatalpha(x)+\hatalpha(y)$. In view of \eqref{eq:mult}, the vectors $x$
  and $y$ are not multiple of each other. Letting $z=yq(x)b(x,y)^{-1}$
  as in the proof of Lemma~\ref{lem:Hensel}, we see
  from~\eqref{eq:q} that $q$ becomes isotropic over $F(\lambda_0)$,
  where $\lambda_0$ is a root of the polynomial
  $\lambda^2+\lambda+q(x)q(y)b(x,y)^{-2}$. Since
  $v(q(x)q(y)b(x,y)^{-2})=0$, the field $F(\lambda_0)$ is an inertial
  extension of $F$.

  Conversely, suppose $q$ becomes isotropic over an inertial quadratic
  extension $K$ of $F$. We have $K=F(\mu)$ where $\mu^2+\mu+u=0$ for
  some $u\in F$ with $v(u)=0$. Let $x_0\otimes1+ x_1\otimes\mu\in
  V\otimes_FK$ be an isotropic vector of $q$. We have
  \begin{equation}
    \label{eq:qK}
    q(x_0\otimes 1+x_1\otimes\mu) = \bigl(q(x_0)-uq(x_1)\bigr) +
    \bigl(b(x_0,x_1)-q(x_1)\bigr)\mu,
  \end{equation}
  hence 
  \[
  q(x_0)=uq(x_1)\qquad\text{and}\qquad b(x_0,x_1)=q(x_1).
  \]
  Since $v(u)=0$, it follows that $v\bigl(q(x_0)\bigr) =
  v\bigl(q(x_1)\bigr) = v\bigl(b(x_0,x_1)\bigr)$, hence (S) does not hold.
\end{proof}

\begin{lem}
  \label{lem:quadextS}
  If an anisotropic quadratic form satisfies~(S), then its scalar
  extension to any inertial quadratic extension also satisfies~(S).
\end{lem}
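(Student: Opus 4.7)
The plan is to verify property (S) for $q_K$ by a direct computation in the basis $\{1, \mu\}$ of $K/F$. By Lemma~\ref{lem:quadext}, $q_K$ is anisotropic over $K$, so Proposition~\ref{prop:properties} yields a well-defined $v$-value function $\hatalpha_K(X) = \frac{1}{2} v(q_K(X))$ on $V_K$; the goal is to show $v(b_K(X, Y)) > \hatalpha_K(X) + \hatalpha_K(Y)$ for every pair of nonzero $X, Y \in V_K$ with $\hatalpha_K(X) = \hatalpha_K(Y)$.

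Writing $X = x_0 + x_1\mu$ and $Y = y_0 + y_1\mu$ with $x_i, y_j \in V$ and using $\mu^2 = -\mu - u$, the key formulas are
\begin{align*}
q_K(X) &= \bigl(q(x_0) - u q(x_1)\bigr) + \mu\bigl(b(x_0, x_1) - q(x_1)\bigr), \\
b_K(X, Y) &= \bigl(b(x_0, y_0) - u b(x_1, y_1)\bigr) + \mu\bigl(b(x_0, y_1) + b(x_1, y_0) - b(x_1, y_1)\bigr).
\end{align*}
By \eqref{eq:vmin}, the valuation of any such element of $K$ equals the minimum of the $v$-valuations of its two $F$-components.

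The first substantive step is to establish the formula $\hatalpha_K(x_0 + x_1\mu) = \min(\hatalpha(x_0), \hatalpha(x_1))$ (using $\hatalpha(0) = \infty$). I would assume without loss of generality that $\hatalpha(x_0) \leq \hatalpha(x_1)$. In the strict case, $v(u) = 0$ makes the rational part of $q_K(X)$ have valuation exactly $v(q(x_0)) = 2\hatalpha(x_0)$. In the equality case $\hatalpha(x_0) = \hatalpha(x_1) = \gamma$, property (S) for $q$ gives $v(b(x_0, x_1)) > 2\gamma$ while $v(q(x_1)) = 2\gamma$, so the $\mu$-part has valuation exactly $2\gamma$. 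Either way, $v(q_K(X)) = 2\min(\hatalpha(x_0), \hatalpha(x_1))$.

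With this formula in hand, suppose $X, Y$ are nonzero with $\hatalpha_K(X) = \hatalpha_K(Y) = \gamma$; then each nonzero vector among $x_0, x_1, y_0, y_1$ has $\hatalpha \geq \gamma$. Property (S') for $q$ over $F$ gives $v(b(\xi, \eta)) > 2\gamma$ for any two nonzero $\xi, \eta$ drawn from this list, and $v(u) = 0$ preserves this in the product $u b(x_1, y_1)$. Hence both $F$-components of $b_K(X, Y)$ have valuation $> 2\gamma$, giving $v(b_K(X, Y)) > 2\gamma = \hatalpha_K(X) + \hatalpha_K(Y)$, which is (S) for $q_K$. The main obstacle is the equality case of the $\hatalpha_K$ formula: a priori the rational part $q(x_0) - u q(x_1)$ could cancel to higher valuation, pushing $\hatalpha_K(X)$ strictly above $\min(\hatalpha(x_0), \hatalpha(x_1))$ and breaking the bound on $v(b_K(X,Y))$; property (S) is precisely what prevents this by forcing the $\mu$-part to have valuation exactly $2\gamma$.
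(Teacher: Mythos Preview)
Your proof is correct and follows essentially the same route as the paper's: you derive the identity $\hatalpha_K(x_0+x_1\mu)=\min\bigl(\hatalpha(x_0),\hatalpha(x_1)\bigr)$ from property~(S) (the equality case being the crucial one, handled via the $\mu$-component), and then bound each $b(x_i,y_j)$ by~(S') to conclude. The only cosmetic difference is that you reduce to $\hatalpha(x_0)\leq\hatalpha(x_1)$ by symmetry, whereas the paper writes out all three cases $v\bigl(q(x_0)\bigr)\lessgtr v\bigl(q(x_1)\bigr)$ separately; your version is slightly more compact but the content is identical.
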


\begin{proof}
  Suppose $q$ is an anisotropic quadratic form over $F$
  satisfying~(S), and $K$ is an inertial quadratic extension of
  $F$. By Lemma~\ref{lem:quadext}, we know $q$ remains anisotropic
  over $K$. We may therefore extend to $V\otimes_FK$ the definition of
  the $v$-value function $\hatalpha$ of \eqref{eq:alpha}. Let $K=F(\mu)$
  where $\mu^2+\mu+u=0$ for some $u\in F$ with $v(u)=0$. For
  $x=x_0\otimes1 +x_1\otimes\mu\in V\otimes_FK$ we have by
  \eqref{eq:qK} and \eqref{eq:vmin}
  \[
  \hatalpha(x)={\textstyle\frac12}\min\bigl(v\bigl(q(x_0)-uq(x_1)\bigr),
  v\bigl(b(x_0,x_1)-q(x_1)\bigr)\bigr).
  \]
  We claim that
  \[
  \hatalpha(x) = \min\bigl(\hatalpha(x_0), \hatalpha(x_1)\bigr).
  \]
  We check this formula case-by-case:
  if $v\bigl(q(x_0)\bigr) = v\bigl(q(x_1)\bigr)$, then
  $v\bigl(q(x_0)-uq(x_1)\bigr)\geq v\bigl(q(x_1)\bigr)$ and, by~(S),
  $v\bigl(b(x_0,x_1)-q(x_1)\bigr)=v\bigl(q(x_1)\bigr)$; thus,
  $\hatalpha(x)=\hatalpha(x_0)=\hatalpha(x_1)$.

  If $v\bigl(q(x_0)\bigr)> v\bigl(q(x_1)\bigr)$, then
  $v\bigl(q(x_0)-uq(x_1)\bigr) = v\bigl(q(x_1)\bigr)$ and
  $v\bigl(b(x_0,x_1)\bigr)> v\bigl(q(x_1)\bigr)$ by~(S'), hence
  $\hatalpha(x)=\hatalpha(x_1)$.

  Likewise, if $v\bigl(q(x_0)\bigr)< v\bigl(q(x_1)\bigr)$, then
  $v\bigl(b(x_0,x_1)\bigr)> v\bigl(q(x_0)\bigr)$, hence
  $\hatalpha(x)=\hatalpha(x_0)$. Thus, the claim is proved.

  Now, suppose $x=x_0\otimes 1 + x_1\otimes\mu$ and $y=y_0\otimes1 +
  y_1\otimes \mu$ are nonzero vectors in $V\otimes_FK$ such that
  $\hatalpha(x)=\hatalpha(y)$, and let $\gamma=\hatalpha(x)$, hence
  \[
  \gamma=\min\bigl(\hatalpha(x_0),\hatalpha(x_1)\bigr) =
  \min\bigl(\hatalpha(y_0),\hatalpha(y_1)\bigr).
  \]
  We have
  \[
  b(x,y) = \bigl(b(x_0,y_0) -ub(x_1,y_1)\bigr) + \bigl(b(x_0,y_1) +
  b(x_1,y_0) - b(x_1,y_1)\bigr) \mu,
  \]
  hence by \eqref{eq:vmin}
  \[
  v\bigl(b(x,y)\bigr) = \min\bigl(v\bigl(b(x_0,y_0) -
  ub(x_1,y_1)\bigr), v\bigl(b(x_0,y_1) + b(x_1,y_0) - b(x_1,y_1)\bigr)
  \bigr).
  \]
  By Property~(S') we have 
  \[
  v\bigl(b(x_i,y_j)\bigr) > \min\bigl(\hatalpha(x_i), \hatalpha(y_j)\bigr)
  \geq2\gamma \qquad \text{for $i$, $j=0$, $1$,}
  \]
  hence 
  \[
  v\bigl(b(x_0,y_0) -
  ub(x_1,y_1)\bigr)>2\gamma \quad\text{and}\quad
  v\bigl(b(x_0,y_1) + b(x_1,y_0) - b(x_1,y_1)\bigr)>2\gamma.
  \]
  Therefore, $v\bigl(b(x,y)\bigr)>2\gamma$, and it follows that
  Property~(S) holds for the extension $q_K$ of $q$ to $K$.
\end{proof}

We now turn to odd-degree extensions. Let $L/F$ be an inertial
extension of odd degree~$d$. The Henselian valuation $v$ has a unique
extension to $L$, for which we also use the notation $v$, and we have
$L=F(\lambda)$ for some $\lambda$ with $v(\lambda)=0$ such that
$\overline{L}=\overline{F}(\overline{\lambda})$ and the minimal
polynomial of $\overline{\lambda}$ over $\overline{F}$ has
degree~$d$. Every anisotropic quadratic form $q\colon V\to F$ remains
anisotropic over $L$ by a theorem of Springer (see
\cite[Corollary~18.5]{EKM}), hence we 
may extend the value function $\hatalpha$ of \eqref{eq:alpha} to
$V\otimes_FL$.

\begin{lem}
  \label{lem:oddext}
  For $x=x_0\otimes1+x_1\otimes\lambda+\cdots+ x_{d-1}\otimes
  \lambda^{d-1} \in V\otimes_FL$, we have
  \[
  \hatalpha(x)=\min\bigl(\hatalpha(x_0),\ldots, \hatalpha(x_{d-1})\bigr).
  \]
  If $q$ satisfies~(S), then its extension $q_L$ to $L$ also
  satisfies~(S). 
\end{lem}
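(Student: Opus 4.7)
The plan is to mirror the two-step structure of Lemma~\ref{lem:quadextS}: first prove the formula $\hatalpha(x)=\min_i\hatalpha(x_i)$, then derive Property~(S) for $q_L$ as a direct consequence.

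For the formula, one inequality is automatic. By Springer's theorem $q_L$ stays anisotropic, so Proposition~\ref{prop:properties} applies on $V\otimes_F L$, and the $v$-value function axioms together with $v(\lambda)=0$ give $\hatalpha(x)\geq\min_i\hatalpha(x_i)$. For the reverse inequality I argue by contradiction. Suppose $\hatalpha(x)>\gamma:=\min_i\hatalpha(x_i)$, let $m$ be the largest index with $\hatalpha(x_m)=\gamma$, and expand
\[
q(x)=P(\lambda)\qquad\text{where}\qquad P(T)=\sum_iq(x_i)T^{2i}+\sum_{i<j}b(x_i,x_j)T^{i+j}\in F[T].
\]
Since $\hatalpha\prec q$ yields $v(q(x_i))=2\hatalpha(x_i)\geq 2\gamma$ and $v(b(x_i,x_j))\geq 2\gamma$, dividing by any $\pi\in F^\times$ with $v(\pi)=2\gamma=v(q(x_m))$ produces $P/\pi\in\mathcal{O}_F[T]$, and the maximality of $m$ ensures that the residue $\overline{P/\pi}\in\overline{F}[T]$ has degree \emph{exactly} $2m$ with unit leading coefficient $\overline{q(x_m)/\pi}$.

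The hypothesis $v(P(\lambda))>2\gamma$ forces $\overline{P/\pi}(\overline{\lambda})=0$, so the minimal polynomial $\overline{g}$ of $\overline{\lambda}$---of degree $d$, and separable since $\overline{L}/\overline{F}$ is separable of odd degree in residue characteristic~$2$---divides $\overline{P/\pi}$. The key numerical observation is that the multiplicity $k$ of $\overline{g}$ satisfies $kd\leq\deg\overline{P/\pi}=2m\leq 2(d-1)$, forcing $k\leq 2-2/d<2$, so $k=1$ and $\overline{\lambda}$ is a \emph{simple} root of $\overline{P/\pi}$. Hensel's Lemma then lifts $\lambda$ to a true root $\lambda^*\in L$ of $P/\pi$ with $\overline{\lambda^*}=\overline{\lambda}$; since $\overline{\lambda^*}$ generates $\overline{L}/\overline{F}$, we have $L=F(\lambda^*)$, so $1,\lambda^*,\dots,(\lambda^*)^{d-1}$ remains an $F$-basis of $L$. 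Consequently $\sum_ix_i(\lambda^*)^i$ is a nonzero element of $V\otimes_F L$ that is isotropic for $q_L$ (since $P(\lambda^*)=0$), contradicting Springer's theorem.

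For the second assertion, writing $x=\sum x_i\lambda^i$, $y=\sum y_j\lambda^j\in V\otimes_F L$ with $\hatalpha(x)=\hatalpha(y)=\gamma$, the first part yields $\gamma=\min_i\hatalpha(x_i)=\min_j\hatalpha(y_j)$. Expanding $b(x,y)=\sum_{i,j}b(x_i,y_j)\lambda^{i+j}$, if any term had $v(b(x_i,y_j))=2\gamma$ then the chain $v(b(x_i,y_j))\geq\hatalpha(x_i)+\hatalpha(y_j)\geq 2\gamma$ would force $\hatalpha(x_i)=\hatalpha(y_j)=\gamma$ with both vectors nonzero, and Property~(S) for $q$ itself would then give $v(b(x_i,y_j))>2\gamma$---a contradiction. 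Hence every term has $v>2\gamma$, and the ultrametric inequality delivers $v(b(x,y))>2\gamma$, which is Property~(S) for $q_L$. The main obstacle is the numerical miracle of the first part: the oddness of $d$ simultaneously ensures separability of $\overline{g}$ and the sharp bound $k\leq 1$, allowing Hensel's Lemma to be applied to a simple root rather than through a more delicate Hensel-factorization argument.
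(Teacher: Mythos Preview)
Your proof is correct. For the second assertion (propagation of Property~(S)) it is essentially the paper's argument, only phrased as a contradiction rather than as a direct appeal to~(S').

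For the formula $\hatalpha(x)=\min_i\hatalpha(x_i)$, however, your route to the reverse inequality is genuinely different. The paper works on the \emph{residue} level: the map $\tilde q_{\hatalpha}\colon V_\gamma\to F_{2\gamma}$ is anisotropic by the very definition of $\hatalpha$, and Springer's odd-degree theorem over $\overline{F}$ keeps it anisotropic after scalar extension to $\overline{L}$; evaluating at $\sum_i x'_i\otimes\overline{\lambda}^i$ (with $x'_i$ the image of $x_i$ in $V_\gamma$) then gives $v\bigl(q_L(x)\bigr)=2\gamma$ directly. You instead stay at the \emph{field} level: assuming $v\bigl(q_L(x)\bigr)>2\gamma$, you note that $\overline{\lambda}$ roots a residue polynomial of degree at most $2(d-1)$, use the bound $kd\le 2(d-1)<2d$ to force the root to be simple, Hensel-lift to $\lambda^*\in L$, and exhibit an isotropic vector $\sum_i x_i\otimes(\lambda^*)^i$ for $q_L$, contradicting Springer's theorem over $F$. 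Your argument avoids the graded-form apparatus entirely and makes the role of the oddness of $d$ arithmetically explicit (it simultaneously forces separability of $\overline{g}$ and the multiplicity bound $k\le1$); the paper's argument is shorter, meshes with the graded framework used throughout, and yields the equality directly rather than by contradiction. One small point worth making explicit in your write-up: the coefficient of $T^{2m}$ in $P$ is $q(x_m)+\sum_{i+j=2m,\,i<j} b(x_i,x_j)$, and the cross terms vanish in the residue because $j>m$ forces $v\bigl(b(x_i,x_j)\bigr)\ge\hatalpha(x_i)+\hatalpha(x_j)>2\gamma$; this is what justifies your ``exactly degree $2m$'' claim.
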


\begin{proof}
  If $x=0$, the formula trivially holds. We may therefore assume
  $x_0$, \ldots, $x_{d-1}$ are not all zero and set
  \[
  \gamma=\min\bigl(\hatalpha(x_0),\ldots, \hatalpha(x_{d-1})\bigr).
  \]
  We have
  \[
  q_L(x) = \sum_{0\leq i\leq d-1} q(x_i)\lambda^{2i} + \sum_{0\leq i<j
    \leq d-1} b(x_i,x_j) \lambda^{i+j}.
  \]
  Since $v\bigl(q(x_i)\bigr)\geq 2\gamma$ by definition of $\gamma$
  and
  \[
  v\bigl(b(x_i,x_j)\bigr) \geq \hatalpha(x_i)+\hatalpha(x_j) \geq 2\gamma
  \]
  by \eqref{eq:iv}, it follows that
  $v\bigl(q_L(x)\bigr)\geq2\gamma$, hence $\hatalpha(x)\geq\gamma$.
  To prove that this inequality is an equality, consider the
  homogeneous component $V_\gamma$ of the graded vector space
  $\gr_{\hatalpha}(V)$ and the quadratic map $\tilde q_{\hatalpha}\colon
  V_\gamma\to F_{2\gamma}$ induced by $q$, as in
  \S\ref{sec:norms}. This map is anisotropic since $v\bigl(q(z)\bigr)
  = 2\hatalpha(z)$ for all $z\in V$, by definition of
  $\hatalpha$. Since $\overline{L}/\overline{F}$ is an odd-degree
  extension, this map remains anisotropic after scalar extension to
  $\overline{L}$, by a theorem of Springer (see
  \cite[Corollary~18.5]{EKM}). Therefore, letting
  $x'_i=x_i+V^{>\gamma}\in V_\gamma$ for $i=0$, \ldots, $d-1$, so that
  $x'_i=\tilde x_i\neq 0$ if $\hatalpha(x_i)=\gamma$ and $x'_i=0$ if
  $\hatalpha(x_i)>\gamma$, we have
  \[
  (\tilde q_{\hatalpha})_{\overline{L}}(x'_0\otimes 1 + x'_1\otimes
  \overline{\lambda}+ \cdots+x'_{d-1}\otimes\overline{\lambda}^{d-1})
  \neq 0.
  \]
  The left side is the image of $q_L(x)$ under the canonical map
  $L^{\geq2\gamma}\to L_{2\gamma}$, hence
  $v\bigl(q_L(x)\bigr)=2\gamma$, and $\hatalpha(x)=\gamma$. The
  formula for $\hatalpha(x)$ is thus established. 

  Now, let $x$, $y\in V\otimes_FL$ be nonzero vectors with
  $\hatalpha(x)=\hatalpha(y)$. Let $\gamma=\hatalpha(x)=\hatalpha(y)$ and
  \[
  x=x_0\otimes 1 +\cdots + x_{d-1}\otimes\lambda^{d-1}, \qquad
  y=y_0\otimes 1+\cdots+ y_{d-1}\otimes \lambda^{d-1}
  \]
  with $x_0$, \ldots, $y_{d-1}\in V$ and
  \[
  \gamma=\min\bigl(\hatalpha(x_0), \ldots, \hatalpha(x_{d-1})\bigr) =
  \min\bigl(\hatalpha(y_0), \ldots, \hatalpha(y_{d-1})\bigr).
  \]
  We have
  \[
  b(x,y) = \sum_{i,j=0}^{d-1} b(x_i, y_j)\lambda^{i+j}.
  \]
  If Property~(S) (hence also (S')) holds for $q$, we have
  \[
  v\bigl(b(x_i,y_j)\bigr) > 2\min\bigl(\hatalpha(x_i),\hatalpha(y_j)\bigr)
  \geq 2\gamma \qquad\text{for all $i$, $j$},
  \]
  hence also $v\bigl(b(x,y)\bigr)>2\gamma$. Thus, Property~(S) holds
  for $q_L$.
\end{proof}

\begin{thm}
  \label{thm:inertnonisot}
  Suppose $F$ is a field with a Henselian dyadic valuation. 
  An aniso\-tropic quadratic form over $F$ satisfies~(S) if and only if it
  remains anisotropic over every inertial extension of $F$.
\end{thm}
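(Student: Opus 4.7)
The plan is to treat the two implications separately. The ``only if'' direction is a direct consequence of Lemma~\ref{lem:quadext}: if (S) fails for $q$, that lemma already produces an inertial quadratic extension of $F$ over which $q$ becomes isotropic, contradicting the hypothesis that $q$ remain anisotropic over every inertial extension.

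For the ``if'' direction, assume $q$ is anisotropic over $F$ and satisfies~(S), and let $L/F$ be an inertial extension. Since $\dim q<\infty$, any isotropy of $q_L$ would be witnessed in a finitely generated, hence finite, subextension, so I may take $[L:F]$ to be finite. The strategy is to enlarge $L$ to an inertial Galois extension $M/F$. Since $F$ is Henselian, the assignment $L\mapsto \overline{L}$ is a bijection between finite inertial extensions of $F$ and finite separable extensions of $\overline{F}$, so $M$ may be defined as the (unique) inertial extension of $F$ whose residue field is the Galois closure of $\overline{L}/\overline{F}$. It then suffices to show that $q_M$ is anisotropic.

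Set $G=\mathrm{Gal}(M/F)$ and let $P\leq G$ be a $2$-Sylow subgroup; the fixed field $K=M^P$ is an inertial extension of $F$ of odd degree $[G:P]$. By Lemma~\ref{lem:oddext} (whose proof invokes Springer's odd-degree theorem), $q_K$ is anisotropic and still satisfies~(S). Replacing $F$ by $K$, I may therefore assume $G=P$ is a $2$-group.

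A finite $2$-group has a composition series all of whose factors have order~$2$, and via the Galois correspondence this gives a tower $F=K_0\subset K_1\subset\cdots\subset K_n=M$ in which every step $K_i/K_{i-1}$ is an inertial quadratic extension. I then propagate anisotropy and (S) along this tower by induction: if $q_{K_{i-1}}$ is anisotropic and satisfies~(S), Lemma~\ref{lem:quadext} gives that $q_{K_i}$ is anisotropic, and Lemma~\ref{lem:quadextS} gives that $q_{K_i}$ still satisfies~(S). After $n$ steps, $q_M$ is anisotropic, hence so is $q_L$. The only nontrivial ingredients beyond the lemmas proved in the section are the existence of the inertial Galois closure (standard for Henselian fields) and Sylow's theorem; the main analytic content has already been absorbed into Lemmas~\ref{lem:oddext}, \ref{lem:quadext}, and \ref{lem:quadextS}, so no genuine obstacle remains.
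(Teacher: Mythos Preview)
Your argument is correct and follows essentially the same route as the paper: pass to a (finite) inertial Galois closure, use a $2$-Sylow subgroup to factor the extension into an odd-degree step handled by Lemma~\ref{lem:oddext} followed by a tower of inertial quadratic steps handled iteratively by Lemmas~\ref{lem:quadext} and~\ref{lem:quadextS}. One cosmetic slip: you have interchanged the labels ``if'' and ``only if''---the direction deduced from Lemma~\ref{lem:quadext} alone is the ``if'' part (anisotropic over all inertial extensions $\Rightarrow$ (S)), and the Sylow/tower argument establishes the ``only if'' part.
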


\begin{proof}
  The ``if'' part follows from Lemma~\ref{lem:quadext}. For the
  converse, suppose $q$ is an anisotropic form over $F$
  satisfying~(S), and let $M$ be an inertial extension of $F$. We have
  to show that $q$ remains anisotropic over $M$. Substituting for $M$
  its Galois closure, we may assume $M$ is Galois over $F$. Let
  $L\subseteq M$ be the subfield fixed under some $2$-Sylow subgroup
  of the Galois group. Then $L/F$ is an odd-degree extension and $M/L$
  is a Galois $2$-extension, hence there is a sequence of field
  extensions
  \[
  L=L_0\subseteq L_1\subseteq \cdots \subseteq L_r = M
  \]
  with $[L_i:L_{i-1}] = 2$ for $i=1$, \ldots, $r$. By a theorem of
  Springer (see \cite[Corollary~18.5]{EKM}), $q$ remains anisotropic
  over $L$, and its 
  extension $q_L$ satisfies~(S) by Lemma~\ref{lem:oddext}. Therefore,
  $q_{L_1}$ is anisotropic by Lemma~\ref{lem:quadext}, and it
  satisfies~(S) by Lemma~\ref{lem:quadextS}. Arguing iteratively, we
  see that $q_M$ is anisotropic (and satisfies~(S)).
\end{proof}

Theorem~\ref{thm:inertnonisot} also yields precise information on the
quadratic forms that are split by an inertial extension of the base
field.

\begin{cor}
  \label{cor:inertsplit}
  Let $q$ be an anisotropic quadratic form over the Henselian dyadic field
  $F$. If $q$ becomes hyperbolic over some inertial extension of $F$, 
  then there exist inertial quadratic extensions $K_1$, \ldots, $K_n$
  of $F$ and $a_1$, \ldots, $a_n\in F^\times$ such that
  \[
  q\simeq \langle a_1\rangle N_1 \perp \ldots \perp \langle a_n\rangle
  N_n
  \]
  where $N_i$ is the (quadratic) norm form of $K_i/F$.
\end{cor}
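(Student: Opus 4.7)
The plan is to induct on $\dim q$, at each step peeling off a binary form that is a scaled norm form of an inertial quadratic extension. The base case $\dim q = 0$ is trivial, so assume $\dim q \geq 2$. Since $q$ becomes hyperbolic (hence isotropic) over an inertial extension $M$, Theorem~\ref{thm:inertnonisot} implies that $q$ fails to satisfy Property~(S). Therefore, we may find nonzero vectors $x$, $y\in V$ with $\hatalpha(x)=\hatalpha(y)$ and $v\bigl(b(x,y)\bigr) = \hatalpha(x) + \hatalpha(y)$.

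Following the construction in the proof of Lemma~\ref{lem:quadext}, I set $z = y\,q(x)\,b(x,y)^{-1}$, so that $b(x,z) = q(x)$, and let $W$ be the $F$-span of $x$ and $z$. A direct computation gives
\[
q(x\lambda + z\mu) = q(x)\bigl(\lambda^2 + \lambda\mu + u\mu^2\bigr) \qquad\text{for $\lambda$, $\mu\in F$,}
\]
where $u = q(x)q(y)b(x,y)^{-2}$ satisfies $v(u)=0$. I claim that the polynomial $X^2+X+\overline{u}\in\overline{F}[X]$ is irreducible: otherwise Hensel's Lemma would yield $t$ in the valuation ring of $F$ with $t^2+t+u=0$, so that $xt+z$ would be an isotropic vector of $q$, contradicting anisotropy. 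Consequently $K := F(\alpha)$ with $\alpha^2+\alpha+u=0$ is an inertial quadratic extension of $F$, and $\lambda^2+\lambda\mu+u\mu^2$ is the norm form $N_{K/F}$ of $K/F$ (read in the $F$-basis $(1,-\alpha)$ of $K$). Since $b(x,z)=q(x)\neq 0$, the plane $W$ is nonsingular, so $V = W\stackrel{\perp}{\oplus} W^\perp$ and $q\rvert_W \simeq \langle q(x)\rangle N_{K/F}$.

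It remains to check that $q\rvert_{W^\perp}$ is anisotropic and becomes hyperbolic over some inertial extension of $F$, after which the induction hypothesis applied to $q\rvert_{W^\perp}$ finishes the proof. Anisotropy is immediate, as $W^\perp\subseteq V$. For the second point, let $M' = M\cdot K$; composita of inertial extensions of a Henselian field are again inertial, so $M'/F$ is inertial. Both $q_{M'}$ and $(q\rvert_W)_{M'}$ are hyperbolic (the latter because $N_{K/F}$ splits over $K\subseteq M'$), so Witt cancellation for nonsingular quadratic forms yields that $(q\rvert_{W^\perp})_{M'}$ is hyperbolic. The main subtlety is the Hensel-lemma step that forces $K/F$ to be a genuine quadratic extension rather than a split \'etale algebra; this is precisely where the anisotropy hypothesis on $q$ is used.
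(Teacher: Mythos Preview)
Your proof is correct and follows essentially the same route as the paper's: both invoke Theorem~\ref{thm:inertnonisot} to see that Property~(S) fails, peel off a scaled norm form of an inertial quadratic extension, and induct on the dimension. The only difference is packaging: the paper cites Lemma~\ref{lem:quadext} and \cite[Proposition~34.8]{EKM} for the splitting step, whereas you unwind those references and construct the plane $W$ and the extension $K$ explicitly from the vectors witnessing the failure of~(S).
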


\begin{proof}
  Since $q$ becomes hyperbolic over some inertial extension of $F$,
  Theorem~\ref{thm:inertnonisot} shows that it does not satisfy
  Property~(S). Therefore, by Lemma~\ref{lem:quadext}, $q$ becomes
  isotropic over some inertial quadratic extension $K_1/F$. By
  \cite[Proposition~34.8]{EKM}, we 
  may then find $a_1\in F^\times$ and a quadratic form $q_1$ such that
  \[
  q\simeq \langle a_1\rangle N_1\perp q_1.
  \]
  Since $q$ becomes hyperbolic over
  some inertial extension of $F$ and $N_1$ becomes hyperbolic over the
  inertial extension $K_1$, the form $q_1$ also becomes hyperbolic
  over some inertial extension of $F$. The corollary follows by
  induction on the dimension.
\end{proof}

If we include the split quadratic $F$-algebra $F\times F$, with
hyperbolic norm form, among the inertial quadratic extensions of $F$,
Witt's decomposition theorem shows that Corollary~\ref{cor:inertsplit}
also holds for isotropic quadratic forms that become
hyperbolic over some inertial extension. It also holds in the
nondyadic case, because then Springer's theorem can be used to show
that every anisotropic quadratic form split by an inertial extension
becomes isotropic over some inertial quadratic extension. Thus, the
same argument as in the proof of Corollary~\ref{cor:inertsplit} applies.

Our last theorem holds without the hypothesis that
$\charac\overline{F}=2$. 

\begin{thm}
  \label{thm:main}
  Let $F$ be a field with a Henselian valuation $v$ (with arbitrary
  residue characteristic), and let $(V,q)$
  be an even-dimensional nondegenerate quadratic space over $F$. The
  following conditions are equivalent:
  \begin{enumerate}
  \item[(a)]
  $q$ becomes hyperbolic over some tame extension of $F$;
  \item[(b)]
  there exists a tame norm $\alpha$ on $V$ compatible with $q$.
  \end{enumerate}
\end{thm}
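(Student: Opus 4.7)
My plan is to prove the two implications separately, with the substance concentrated in $(a)\Rightarrow(b)$ in the dyadic case.

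For $(b)\Rightarrow(a)$, given a tame compatible norm $\alpha$ on $(V,q)$, I would first read off from Theorem~\ref{thm:gradedWitt} that $\tilde q_\alpha$ corresponds to a finite tuple of residue forms $(q_\Delta)$ over $\overline{F}$ (finite since $\dim_{\gr(F)}\gr_\alpha(V)=\dim_F V$). Each $q_\Delta$ is even-dimensional and nonsingular, so is split by a finite separable extension of $\overline{F}$. Taking a joint splitting field $\overline{L}/\overline{F}$, I would lift it to the unique inertial (hence tame) extension $L/F$ with residue field $\overline{L}$, which exists because $F$ is Henselian. Extending $\alpha$ along a splitting base to $\alpha_L$ on $V_L$, the graded form $\tilde{(q_L)}_{\alpha_L}$ has all its residues hyperbolic, so is itself hyperbolic, and Theorem~\ref{thm:Wittindex} applied to the Henselian field $L$ forces $q_L$ to be hyperbolic.

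For $(a)\Rightarrow(b)$ when $\charac\overline{F}\neq2$, there is nothing to do: every even-dimensional nonsingular quadratic form already admits a tame compatible norm by \cite[Cor.~3.6]{RTW}. Assume then $\charac\overline{F}=2$ and proceed by induction on $\dim V$. If $q$ is isotropic, write $q\simeq H\perp q'$; the hyperbolic plane $H$ carries a tame compatible norm (as in Example~\ref{ex:notbounded}), the complement $q'$ also becomes hyperbolic over the given tame extension, and Lemma~\ref{lem:sum} combines the norms after induction is applied to $q'$. If $q$ is anisotropic, the key claim is that $q$ becomes isotropic over some inertial quadratic extension $K/F$. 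Granting this, \cite[Prop.~34.8]{EKM} yields $q\simeq\langle a\rangle N_K\perp q_1$; since $\langle a\rangle N_K$ becomes hyperbolic over the inertial extension $K$, so does $q_1$ over the original tame splitting field. The summand $\langle a\rangle N_K$ carries a tame compatible norm obtained from Example~\ref{ex:quadext} Case~2 by shifting by $\frac12 v(a)$, $q_1$ has strictly smaller dimension, and Lemma~\ref{lem:sum} together with the induction hypothesis finishes the argument.

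The main obstacle is establishing the isotropy claim above, and I would argue by contradiction. If $q$ remained anisotropic over every inertial quadratic extension, Lemma~\ref{lem:quadext} would force $q$ to satisfy Property~(S), and Theorem~\ref{thm:inertnonisot} would then keep $q$ anisotropic over every inertial extension of $F$, in particular over the maximal inertial extension $F_i$. Since $\overline{F_i}=\overline{F}^{\mathrm{sep}}$, every tame extension of $F_i$ is totally tamely ramified, and in the dyadic case is therefore of odd degree. Applying Springer's odd-degree theorem \cite[Cor.~18.5]{EKM}, $q_{F_i}$ stays anisotropic over the compositum $M\cdot F_i$, where $M$ is the tame splitting field supplied by~(a). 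This contradicts the hyperbolicity of $q$ over $M\subseteq M\cdot F_i$, and the claim, hence the theorem, follows.
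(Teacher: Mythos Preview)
Your proof is correct and follows the same strategy as the paper: for (b)$\Rightarrow$(a) both lift a separable splitting field of the residue forms to an inertial extension of $F$ and invoke Theorem~\ref{thm:Wittindex}; for (a)$\Rightarrow$(b) in the dyadic case both peel off summands $\langle a\rangle N_K$ using the fact that an anisotropic tamely-split form becomes isotropic over some inertial quadratic extension, which is exactly the content of Corollary~\ref{cor:inertsplit}, and then build the tame norm piecewise via Example~\ref{ex:quadext} and Lemma~\ref{lem:sum}.

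One small slip: in the inductive step you assert that $q_1$ becomes hyperbolic ``over the original tame splitting field'' $M$, but $(\langle a\rangle N_K)_M$ need not be hyperbolic since $K$ need not embed in $M$. What you actually need, and what suffices for the induction, is that $q_1$ becomes hyperbolic over the tame extension $M\cdot K$. A second minor point: your argument for (b)$\Rightarrow$(a) is stated without a residue-characteristic hypothesis, but the claim that each $q_\Delta$ is even-dimensional relies on $\charac\overline{F}=2$; this is harmless since (a) holds automatically in the non-dyadic case.

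On the positive side, your passage through the maximal inertial extension $F_i$ and Springer's odd-degree theorem makes explicit the reduction from ``tamely split'' to ``inertially split'' that the paper's direct appeal to Corollary~\ref{cor:inertsplit} leaves implicit.
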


\begin{proof}
  If $\charac\overline{F}\neq2$, both conditions hold for all
  even-dimensional nondegenerate
  quadratic spaces: (a)~because the quadratic
  closure of $F$ is tame and (b)~by \cite[Corollary~3.6]{RTW}. For
  the rest of the proof, we may thus assume
  $\charac\overline{F}=2$. We may also assume $q$ is anisotropic
  because if (b)~holds for an anisotropic form it also holds for every
  Witt-equivalent form by Lemma~\ref{lem:sum} since hyperbolic forms
  admit tame compatible norms (see Example~\ref{ex:notbounded}).

  If (a) holds, then by Corollary~\ref{cor:inertsplit} we may find a
  decomposition
  \[
  q\simeq\langle a_1\rangle N_1\perp\ldots\perp\langle a_n\rangle N_n
  \]
  for some $a_1$, \ldots, $a_n\in F^\times$ and for $N_1$, \ldots,
  $N_n$ the norm forms of some inertial quadratic extensions $K_1$,
  \ldots, $K_n$ of $F$. Example~\ref{ex:quadext} shows that for each
  $i=1$, \ldots, $n$ the unique valuation on $K_i$ extending $v$ is a
  tame norm compatible with $N_i$, hence also with $\langle a_i\rangle
  N_i$. The direct sum of these norms is a tame norm compatible with
  $q$ by Lemma~\ref{lem:sum}, hence condition~(b) holds.

  Conversely, suppose there is a tame norm $\alpha$ on $V$ compatible
  with $q$. Then we may find a separable extension $L'$ of
  $\overline{F}$ such that $\tilde q_\alpha$ splits after scalar
  extension to $\gr(F)\otimes_{\overline{F}}L'$. Let $L$ be an
  inertial lift of $L'$, i.e., $L/F$ is an inertial extension such
  that $\overline{L}=L'$. Write again $v$ for the unique extension of
  $v$ to $L$. Then $\alpha\otimes v$ is a tame norm on $V\otimes_FL$
  compatible with $q_L$, and $(\tilde{q_L})_{\alpha\otimes v} =
  (\tilde q_\alpha)_{\gr(L)}$ is split since
  $\gr(L)=\gr(F)\otimes_{\overline{F}} L'$. Therefore,
  Theorem~\ref{thm:Wittindex} shows that $q_L$ is hyperbolic.
\end{proof}

As an example, consider the field $F=\Q_2((t))$ of Laurent series in
one indeterminate over the field of $2$-adic numbers. The composite of
the $2$-adic valuation on $\Q_2$ and the $t$-adic valuation on $F$ is
a Henselian valuation $v$ on $F$ with value group $\Z^2$ and residue
field $\F_2$. We have $I_q(\F_2)\simeq\Z/2\Z$, and the unique nontrivial
Witt class is represented by the norm form of $\F_4$. The unique
inertial quadratic extension of $F$ is $F(\sqrt{5})$, and it follows
from Theorem~\ref{thm:gradedWitt}, Corollary~\ref{cor:deliso},
Corollary~\ref{cor:inertsplit} and Theorem~\ref{thm:main} that
\[
I_{qt}(F)\simeq(\Z/2\Z)^4,
\]
with generators the Witt classes of the following forms:
$\langle1,-5\rangle$, 
$\langle2\rangle\langle1,-5\rangle$, $\langle
t\rangle\langle1,-5\rangle$, $\langle2t\rangle\langle1,-5\rangle$. 

By contrast, the full Witt groups $W(F)$, $I_q(F)$ may also be
determined by using Springer's theorem for the $t$-adic valuation,
since the Witt group of $\Q_2$ is known (see for instance
\cite[Ch.~VI, Remark~2.31]{Lam}); we thus get
\[
W(F)\simeq (\Z/8\Z)^2\oplus(\Z/2\Z)^4,
\]
with generators the Witt classes of $\langle1\rangle$, $\langle
t\rangle$, $\langle1,-2\rangle$, $\langle t\rangle\langle1,-2\rangle$,
$\langle1,-5\rangle$, and $\langle t\rangle\langle1,-5\rangle$. Note
that $4\langle1\rangle\simeq\langle1,-2,-5,10\rangle$ over $\Q_2$ (see
\cite[\emph{loc. cit.}]{Lam}), hence
$\langle2\rangle\langle1,-5\rangle$ is Witt-equivalent to
$\langle1,-5\rangle - 4\langle1\rangle$. Therefore, 
\[
I_q(F)/I_{qt}(F)\simeq(\Z/4\Z)\oplus(\Z/2\Z)^3
\]
with generators represented by $\langle1,t\rangle$,
$\langle1,1\rangle$, $\langle1,-2\rangle$, and $\langle
t\rangle\langle1,-2\rangle$.

\end{document}